\documentclass{article}%
\usepackage{amsmath}
\usepackage{amssymb}
\usepackage{euler}
\usepackage{amsfonts}
\usepackage{graphicx}%
\providecommand{\U}[1]{\protect\rule{.1in}{.1in}}
\newtheorem{theorem}{Theorem}

\newtheorem{corollary}[theorem]{Corollary}

\newtheorem{lemma}[theorem]{Lemma}

\newtheorem{proposition}[theorem]{Proposition}

\newenvironment{proof}[1][Proof]{\noindent\textbf{#1.} }{\ \rule{0.5em}{0.5em}}
\begin{document}

\title{\textsf{Constructive Notes on Locally Convex Spaces}}
\author{\textsf{Douglas S. Bridges}}
\maketitle

\begin{abstract}%
\noindent
\textsf{We give a detailed, corrected presentation of some fundamentals of the
constructive theory of locally convex spaces that appear without proofs
in\ \cite[Section 5.4]{BVtech}. This suffices for some important functional
analytic theorems that are stated in our final section.}

\end{abstract}

%

\normalfont\sf
%

\bigskip
%

\bigskip
%

\bigskip

\section{Introduction}

At the foot of pages 129 in \cite{BVtech}, in our discussion of locally convex
spaces in constructive analysis, we wrote:

\begin{quote}
The proofs of the next five results are similar to those of their counterparts
in metric space theory ... and are left as an exercise.
\end{quote}

%

\noindent
It turns out that some of those proofs are not quite so straightforward as our
quote suggests, and one theorem appears not to be constructively
provable.\footnote{%
\normalfont\sf
Theorem 5.4.6 of \cite{BVtech} seems unlikely to hold constructively as
stated. We prove it under the restriction that the defining family of
seminorms is countable (Corollary \ref{c22}), which classically is equivalent
to the space being metrisable. In the general case, we can replace
\emph{totally bounded} in the conclusion by $F$\emph{-totally bounded}, where
$F$ is a given finitely enumerable subset of the index set of the defining
family of seminorms (Theorem \ref{t11}). This version of the theorem seems
sufficiently powerful in the constructive context.} In the present
article\footnote{%
\normalfont\sf
\textbf{Keywords: \ }locally convex, constructive \ \ \textbf{MSC
classification: \ }46S30} we provide proofs of those results (corrected as
necessary), as well as correcting some others in \cite[Section 5.4]{BVtech}.
To do so, we give an amplified presentation of some fundamental elements of
the theory of locally convex spaces within the framework of Bishop's
constructive analysis (a good introduction to which is given by the articles
\cite{Haj1,Haj2} in \cite{Handbook}).

\section{Locally convex topologies}

A \textbf{locally convex space} is a pair $\left(  X,\left(  p_{i}\right)
_{i\in I}\right)  $, where

\begin{itemize}
\item $X$ is a real or complex linear space, with inequality relation $\neq$;

\item for each $i$ in the index set $I$, $p_{i}$ is a seminorm on $X$;

\item $x\neq0$ if and only if $p_{i}(x)>0$ for some $i\in I$.\footnote{%
\normalfont\sf
We omitted this requirement in \cite[Section 5.4]{BVtech}.}
\end{itemize}

%

\noindent
We call the functions $p_{i}$ the \textbf{defining seminorms} of the locally
convex space. If it is clear what the defining seminorms are, we refer to $X$
itself as a locally convex space. Given an inhabited\footnote{%
\normalfont\sf
From now on we shall assume that\emph{\ finitely enumerable} means
\emph{inhabited and finitely enumerable.}} finitely enumerable subset of $I$,
a point $x_{0}\in X$, and $r>0$, we define the \textbf{open and closed }%
$F$\textbf{-balls} with centre $x_{0}$ and radius $r$ to be, respectively,%
\begin{align*}
B^{F}(x_{0},r)  &  \equiv\left\{  x\in X:\sum_{i\in F}p_{i}(x-x_{0}%
)<r\right\}  \text{ and}\\
\overline{B}^{F}(x_{0},r)  &  \equiv\left\{  x\in X:\sum_{i\in F}p_{i}%
(x-x_{0})\leq r\right\}  .
\end{align*}
The \textbf{locally convex topology} $\tau_{X}$ on $X$ consists of all finite
unions of open $F$-balls. We also denote $B^{F}(0,r)$ by $B^{F}(r)$, and
$\overline{B}^{F}(0,r)$ by $\overline{B}^{F}(r)$.

Let $S\subset X$. The \textbf{closure }of $S$ in $X$ is the set $S^{c}$ of all
$x\in X$ such that $S\cap B^{F}(x,r)$ is inhabited for each finitely
enumerable $F\subset I$ and each $\varepsilon>0$. We say that $S$ is
\textbf{closed }in $X$ if $S=S^{c}$; \textbf{dense }in $X$ if $S^{c}=X$; and
\textbf{separable} if it has a countable dense subset. On the other hand, $S$
is \textbf{bounded} if there exist $c>0$, a finitely enumerable $F\subset I$,
and $r>0$ such that $S\subset cB^{F}(r)$.

For example, a normed linear space $\left(  X,\left\Vert \ \right\Vert
\right)  $ is a locally convex space in which the family of defining seminorms
comprises the single norm $\left\Vert \ \right\Vert $, and the locally convex
topology is just the standard metric topology associated with that
norm.\footnote{%
\normalfont\sf
From now on, when we write $\mathbb{R}$ or $\mathbb{C}$, we are thinking of
these spaces as locally convex relative to the single defining seminorm
$x\rightsquigarrow\left\vert \ x\right\vert $.} If $Y$ is also a normed linear
space, then the linear space $\mathcal{B}(X,Y)$ of bounded linear mappings of
$X$ into $Y$ has a locally convex structure with defining seminorms
$\left\Vert \;\right\Vert _{x}:T\rightsquigarrow\left\Vert Tx\right\Vert
$,$\ $indexed by the vectors $x\in X$ with $\left\Vert x\right\Vert \leq1$. In
the special case where $Y$ is the groundfield ($\mathbb{R}$ or $\mathbb{C}$)
the locally convex structure gives us the \textbf{weak}$^{\ast}$%
\textbf{\ topology} on the dual space $X^{\ast}$ of all bounded linear
functionals on $X$. If $X=Y=H$, where $H$ is a Hilbert space, then the
seminorms $\left\Vert \ \right\Vert _{x}$ with $x\in H$ and $\left\Vert
x\right\Vert \leq1$ give us the \textbf{strong operator topology} on the space
$\mathcal{B}(H)$ of bounded linear operators on $H$. Another important
example\footnote{%
\normalfont\sf
There are others, notable the ultrastrong and ultraweak operator topologies
\cite[Chapter 1]{KR}.} of a locally convex structure on $\mathcal{B}(H)$ has
defining seminorms $T\rightsquigarrow\left\vert \left\langle Tx,y\right\rangle
\right\vert $ indexed by the ordered pairs $\left(  x,y\right)  $ of vectors
in $H$ with $\left\Vert x\right\Vert ,\left\Vert y\right\Vert \leq1$; this
structure gives rise to the \textbf{weak operator topology }on $\mathcal{B}%
(H)$.

Returning to the general locally convex space $\left(  X,\left(  p_{i}\right)
_{i\in I}\right)  $, let $f$ be a mapping of a subset $S$ of $X$ into a
locally convex space $\left(  Y,\left(  q_{j}\right)  _{j\in J}\right)  $.
Then $f$ is (pointwise) continuous, in the usual topological sense, at $a\in
S\ $if for each finitely enumerable subset $G$ of $J$ and each $\varepsilon
>0$, there exist a finitely enumerable subset $F$ of $I$ and $\delta>0$ such
that if $x\in S$ and $\sum_{i\in F}p_{i}(x-a)<\delta$, then $\sum_{j\in
J}q_{j}(f(x)-f(a))<\varepsilon$. We say that $f$ is \textbf{uniformly
continuous} on $S$ if for each finitely enumerable subset $G$ of $J$ and each
$\varepsilon>0$, there exist a finitely enumerable subset $F$ of $I$ and
$\delta>0$ such that if $x,y\in S$ and $\sum_{i\in F}p_{i}(x-y)<\delta$, then
$\sum_{j\in J}q_{j}(f(x)-f(y))<\varepsilon$.

\begin{proposition}
\label{jan22p2}Let $S$ be a subset of the locally convex space $\left(
X,\left(  p_{i}\right)  _{i\in I}\right)  $,\ let $s\in X$, and let $F$ be a
finitely enumerable subset of $I$. Then the mapping $x\rightsquigarrow
\sum_{i\in F}p_{i}(x-s)$ of $X$ into $\mathbb{R}$ is uniformly continuous. In
particular, for each $i\in I$ the mapping $p_{i}$ is uniformly continuous on
$X$.
\end{proposition}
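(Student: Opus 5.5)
The plan is to check the definition of uniform continuity directly, using the reverse triangle inequality for seminorms. The target space is $\mathbb{R}$ with the single seminorm $\left\vert\ \right\vert$. So the only finitely enumerable index set on the target side is essentially the singleton, and the target sum is just a finite multiple of $\left\vert f(x)-f(y)\right\vert$, where $f(x)\equiv\sum_{i\in F}p_{i}(x-s)$.

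Given $\varepsilon>0$, I will choose the finitely enumerable subset of $I$ to be $F$ itself, and take $\delta=\varepsilon$. I will shrink $\delta$ by the appropriate finite factor if the target-side index set is a finitely enumerable listing of the single seminorm with repetitions.

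The key estimate comes first. For each seminorm $p_{i}$ and all $x,y\in X$, subadditivity gives $p_{i}(x-s)\leq p_{i}(x-y)+p_{i}(y-s)$. Exchanging the roles of $x$ and $y$ yields
\[
\left\vert p_{i}(x-s)-p_{i}(y-s)\right\vert \leq p_{i}(x-y).
\]
Summing over $i\in F$ and applying the triangle inequality in $\mathbb{R}$ gives
\[
\left\vert f(x)-f(y)\right\vert \leq\sum_{i\in F}\left\vert p_{i}(x-s)-p_{i}(y-s)\right\vert \leq\sum_{i\in F}p_{i}(x-y).
\]
Hence $\sum_{i\in F}p_{i}(x-y)<\delta$ forces $\left\vert f(x)-f(y)\right\vert <\varepsilon$.

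The only mildly delicate point is constructive bookkeeping. A finitely enumerable $F$ may list an index more than once, so I will read $\sum_{i\in F}$ as a sum over an enumeration. The argument above works for any enumeration, since the inequality is applied termwise. Everything else is elementary and valid constructively; no case distinctions are needed.

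For the final assertion, I take $F=\{i\}$ and $s=0$.
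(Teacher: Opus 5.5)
Your proof is correct and follows the paper's argument: the termwise estimate $\left\vert p_{i}(x-s)-p_{i}(y-s)\right\vert \leq p_{i}(x-y)$, summed over $F$, with $F$ itself chosen on the source side and $\delta=\varepsilon$. Your extra bookkeeping (enumerations with repetitions, the target-side index set for $\mathbb{R}$) and your explicit choice of $F=\{i\}$ together with $s=0$ for the final clause are harmless refinements of details the paper leaves implicit.
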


\begin{proof}
For each $\varepsilon>0$, if $x,y\in X$ and $\sum_{i\in F}p_{i}%
(x-y)<\varepsilon$, then (since the $p_{i}$ are seminorms)%
\begin{align*}
\left\vert \sum_{i\in F}p_{i}(x-s)-\sum_{i\in F}p_{i}(y-s)\right\vert  &
\leq\sum_{i\in F}\left\vert p_{i}(x-s)-p_{i}(y-s)\right\vert \\
&  \leq\sum_{i\in F}p_{i}((x-s)-(y-s))=\sum_{i\in F}p_{i}(x-y)<\varepsilon.
\end{align*}
The final conclusion of the proposition is just the case $s=0$.
\end{proof}

%

\medskip

In Proposition 5.4.1 of \cite{BVtech} and its proof, every instance of
\emph{linear mapping }should be replaced by \emph{linear functional. }Here is
the correct statement and proof of the more general proposition.

\begin{proposition}
\label{jan20p1}Let $\left(  X,\left(  p_{i}\right)  _{i\in I}\right)  $ and
$\left(  Y,\left(  q_{j}\right)  _{j\in J}\right)  $ be locally convex spaces.
Then the following are equivalent conditions on a linear mapping
$u:X\rightarrow Y$.

\begin{enumerate}
\item[\emph{(a)}] $u$ is continuous at $0$.

\item[\emph{(b)}] $u$ is continuous on $X$.

\item[\emph{(c)}] $u$ is uniformly continuous on $X$.

\item[\emph{(d)}] For each finitely enumerable subset $G$ of $J$ there exist
$C>0$ and a finitely enumerable set $F\subset I$ such that%
\begin{equation}
\sum_{j\in J}q_{j}(u(x))\leq C\sum_{i\in F}p_{i}(x)\ \ \ (x\in X). \label{30a}%
\end{equation}

\end{enumerate}
\end{proposition}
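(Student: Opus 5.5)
The plan is the usual cycle of implications (a)$\Rightarrow$(d)$\Rightarrow$(c)$\Rightarrow$(b)$\Rightarrow$(a). Throughout, the sums over $J$ in the definitions and in (\ref{30a}) are read as sums over the finitely enumerable set $G$, which is evidently what is intended. The implications (c)$\Rightarrow$(b)$\Rightarrow$(a) are immediate from the definitions, since uniform continuity gives continuity at each point with the same $F$ and $\delta$.

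For (d)$\Rightarrow$(c), fix $G$ and $\varepsilon>0$, and take $C$ and $F$ as in (d). Set $\delta=\varepsilon/C$. If $\sum_{i\in F}p_{i}(x-y)<\delta$, then linearity of $u$ and (\ref{30a}) give
\[
\sum_{j\in G}q_{j}(u(x)-u(y))=\sum_{j\in G}q_{j}(u(x-y))\leq C\sum_{i\in F}p_{i}(x-y)<\varepsilon .
\]

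The main step is (a)$\Rightarrow$(d), and here care is needed constructively. Fix $G$ and apply continuity at $0$ with $\varepsilon=1$. This gives $F$ and $\delta>0$ such that $\sum_{i\in F}p_{i}(x)<\delta$ implies $\sum_{j\in G}q_{j}(u(x))<1$. Put $C=2/\delta$. Classically one rescales $x$ by $\sum_{F}p_{i}(x)$, but we cannot decide whether that quantity is positive or zero. We avoid the case split as follows. Suppose, towards a contradiction, that $\sum_{G}q_{j}(u(x))>C\sum_{F}p_{i}(x)$. Choose $t>0$ with
\[
t\sum_{F}p_{i}(x)<\delta \quad\text{and}\quad t\sum_{G}q_{j}(u(x))>1.
\]
Such a $t$ exists: take any $t$ strictly between $1/\sum_{G}q_{j}(u(x))$ and $\delta/\sum_{F}p_{i}(x)$. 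The latter is read as $+\infty$ when the denominator is $0$, but to avoid division we simply pick $t=\frac{3}{2}\big/\sum_{G}q_{j}(u(x))$ and check that $t\sum_{F}p_{i}(x)<\frac{3}{2}/C=\frac{3}{4}\delta<\delta$. Then $tx$ satisfies the hypothesis of continuity at $0$, yet $\sum_{G}q_{j}(u(tx))>1$, which is a contradiction. Hence $\neg\bigl(\sum_{G}q_{j}(u(x))>C\sum_{F}p_{i}(x)\bigr)$, and for real numbers this gives $\sum_{G}q_{j}(u(x))\leq C\sum_{F}p_{i}(x)$ constructively. This proves (\ref{30a}).

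The anticipated obstacle is exactly this avoidance of deciding whether $\sum_{F}p_{i}(x)=0$. The negation argument handles it, because $\neg(a>b)$ implies $a\leq b$ constructively.
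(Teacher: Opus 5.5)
Your proof is correct and has the same structure as the paper's: the routine implications (d)$\Rightarrow$(c)$\Rightarrow$(b)$\Rightarrow$(a), and for (a)$\Rightarrow$(d) a rescaling of $x$ into the neighbourhood of $0$ given by continuity, with the sums over $J$ rightly read as sums over $G$. The only difference is how you avoid dividing by $\sum_{i\in F}p_i(x)$. The paper rescales by $C^{-1}/(\varepsilon+\sum_{i\in F}p_i(x))$ to get $\sum_{j\in G}q_j(u(x))<C(\sum_{i\in F}p_i(x)+\varepsilon)$ for every $\varepsilon>0$. You instead argue by contradiction from $\sum_{j\in G}q_j(u(x))>C\sum_{i\in F}p_i(x)$, which forces the left side to be positive so that you may divide by it; this is constructively valid because $a\le b$ is $\neg(a>b)$.
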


\begin{proof}
It is routine to show that (d)$\ \Rightarrow\ $(c)\ $\Rightarrow
\ $(b)\ $\Rightarrow\ $(a). To complete the proof, suppose that $u$ is
continuous at $0$, and let $G$ be a finitely enumerable subset of $J$. There
exist $C>0$ and a finitely enumerable set $F\subset I$ such that if
$\sum_{i\in F}p_{i}(x)\leq C^{-1}$, then $\sum_{j\in J}q_{j}(x)<1$. For each
$x\in X$ and each $\varepsilon>0$ we have%
\[
\sum_{i\in F}p_{i}\left(  \frac{C^{-1}x}{\varepsilon+\sum_{i\in F}p_{i}%
(x)}\right)  \leq C^{-1},
\]
so%
\[
\sum_{j\in J}q_{j}\left(  u\left(  \frac{C^{-1}x}{\sum_{i\in F}p_{i}%
(x)+\varepsilon}\right)  \right)  <1\text{ }%
\]
and therefore%
\[
\sum_{j\in J}q_{j}(u(x))<C\left(  \sum_{i\in F}p_{i}(x)+\varepsilon\right)  .
\]
Since $x\in X$ and $\varepsilon>0$ are arbitrary, (\ref{30a}) now follows.
\end{proof}

%

\medskip

Let $S$ be a subset of the locally convex space $X$. If $F$ is a finitely
enumerable subset of $I$ and%
\[
\rho^{F}(x,S)\equiv\inf\left\{  \sum_{i\in F}p_{i}(x-y):y\in S\right\}
\]
exists, then $F$ is $F$\textbf{-located} in $X$. If this holds for all
finitely enumerable $F\subset I$, then $S$ is \textbf{located }in $X$.

\begin{lemma}
\label{l32}Let $\left(  X,\left(  p_{i}\right)  _{i\in I}\right)  $ be a
locally convex space, and $u$ a nonzero continuous linear functional on $X$.
Let $F$ be a finitely enumerable subset of $I$, and $C>0$ be such that
$\left\vert u(x)\right\vert \leq C\sum_{i\in F}p_{i}(x)$ for each $x\in X$.
Then $\ker u$ is $F$-located if and only if%
\[
s_{x}\equiv\inf\left\{  t>0:u(x)\in tu(\overline{B}^{F}(1))\right\}
\]
exists for each $x\in X$. In that case, if $u(x)\neq0$, then $s_{x}>0$.
\end{lemma}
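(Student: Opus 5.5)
The plan is to prove that the two infima in question are infima of two sets that ``interlock'', so that one exists exactly when the other does, and that they are then equal: $\rho^{F}(x,\ker u)=s_{x}$. Fix $x\in X$ and write $p\equiv\sum_{i\in F}p_{i}$. Put
\[
D\equiv\left\{ p(x-y):y\in\ker u\right\} ,\qquad T\equiv\left\{ t>0:u(x)\in tu(\overline{B}^{F}(1))\right\} .
\]
Then $\ker u$ is $F$-located iff $\inf D$ exists (for every $x$), and $s_{x}=\inf T$.

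\textbf{Step 1: two elementary transfer facts.}

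(i) Let $t\in T$, so $u(x)=tu(z)$ for some $z$ with $p(z)\leq1$. Set $y\equiv x-tz$. Then $u(y)=0$, so $y\in\ker u$, and $p(x-y)=tp(z)\leq t$. Hence every $t\in T$ dominates some $d\in D$.

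(ii) Let $d=p(x-y)\in D$ with $y\in\ker u$, and let $\varepsilon>0$. Put $t\equiv d+\varepsilon>0$ and $z\equiv t^{-1}(x-y)$. Then $p(z)\leq1$ and $u(x)=u(x-y)=tu(z)$. Hence $d+\varepsilon\in T$ for every $\varepsilon>0$.

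Neither step requires deciding whether $x\in\ker u$ or whether $d=0$, which is why I will work with $d+\varepsilon$ rather than $d$ itself.

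\textbf{Step 2: lower bounds and approximants coincide.}

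Lower bounds. By (i), any lower bound of $D$ is a lower bound of $T$. By (ii), if $b$ is a lower bound of $T$, then $b\leq d+\varepsilon$ for all $\varepsilon>0$, so $b\leq d$. Thus $D$ and $T$ have the same lower bounds.

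Approximants. Use the constructive definition of infimum: a lower bound $m$ such that for each $\varepsilon>0$ some element of the set is $<m+\varepsilon$.
\begin{itemize}
\item Suppose $m=\inf D$ and $\varepsilon>0$. Pick $d\in D$ with $d<m+\varepsilon$. By (ii), $d+\varepsilon\in T$ and $d+\varepsilon<m+2\varepsilon$. So $m=\inf T$.
\item Suppose $m=\inf T$. Pick $t\in T$ with $t<m+\varepsilon$. By (i), there is $d\leq t<m+\varepsilon$ in $D$. So $m=\inf D$.
\end{itemize}
This gives the equivalence, with $s_{x}=\rho^{F}(x,\ker u)$.

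\textbf{Step 3: positivity.} Suppose $u(x)\neq0$, so $\left\vert u(x)\right\vert >0$. For each $y\in\ker u$, the hypothesis on $u$ gives
\[
\left\vert u(x)\right\vert =\left\vert u(x-y)\right\vert \leq Cp(x-y).
\]
So $C^{-1}\left\vert u(x)\right\vert $ is a positive lower bound of $D$. By Step 2, $s_{x}=\inf D\geq C^{-1}\left\vert u(x)\right\vert >0$.

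The only point needing care is the constructive bookkeeping in Step 2. We must not assume $\inf D>0$ or split cases on whether $x\in\ker u$; the $\varepsilon$-shift in (ii) handles this uniformly. The hypothesis that $u$ is nonzero is not really needed beyond ensuring the setting is meaningful.
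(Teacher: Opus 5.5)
Your proof is correct and follows the paper's route. The paper likewise uses the correspondence $y=x-tz$ to show that $\{t>0:\exists y\in\ker u\,(\sum_{i\in F}p_i(x-y)\leq t)\}$ equals $\{t>0:u(x)\in tu(\overline{B}^{F}(1))\}$. Your $\varepsilon$-shift in Step 2 simply makes explicit the passage between this set and $\{\sum_{i\in F}p_i(x-y):y\in\ker u\}$, which the paper leaves to the reader. For positivity the paper argues by contradiction from $s_x<r<C^{-1}\vert u(x)\vert$, using the same estimate $\vert u(w)\vert\leq C\sum_{i\in F}p_i(w)$; your direct bound $s_x\geq C^{-1}\vert u(x)\vert$ is the same idea stated explicitly.
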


\begin{proof}
For each $x\in X$ and each $t>0$,%
\begin{align*}
\exists y\in\ker u\left(
{\textstyle\sum_{i\in F}}
p_{i}(x-y)\leq t\right)   &  \Leftrightarrow\exists z\in X\left(  \sum_{i\in
F}p_{i}(z)\leq1\text{ and }u(x-tz)=0\right) \\
&  \Leftrightarrow u(x)\in tu(\overline{B}^{F}(1)).
\end{align*}
Thus%
\[
\left\{  t>0:\exists y\in\ker u\left(
{\textstyle\sum_{i\in F}}
p_{i}(x-y)\leq t\right)  \right\}  =\left\{  t>0:u(x)\in tu(\overline{B}%
^{F}(1))\right\}  .
\]
Since $\rho^{F}(x,\ker u)$, if it exists, equals the infimum of the left-hand
set, and $s_{x}$, if it exists, equals the infimum of the right-hand set, the
first part of the lemma now follows.

Supposing that $s_{x}$ exists and that $u(x)\neq0$, let $0<r<C^{-1}\left\vert
u(x)\right\vert $. If $s_{x}<r$, then there exist a positive $t<r$ and
$z\in\overline{B}^{F}(1)$ such that $u(x)=tu(z)=u(tz)$. But $\sum_{i\in
F}p_{i}(tz)=t\sum_{i\in F}p_{i}(z)\leq t<r$, so $\left\vert u(tz)\right\vert
\leq C\sum_{i\in F}p_{i}(tz)<Cr<\left\vert u(x)\right\vert $, a contradiction.
Hence $s_{x}\geq r>0$.
\end{proof}

%

\medskip

Now let $u$ be a continuous linear functional on our locally convex space $X$,
and let $F\subset I$ be finitely enumerable. If%
\[
\left\Vert u\right\Vert _{F}\equiv\sup\left\{  \left\vert u(x)\right\vert
:x\in X,\,\sum_{i\in F}p_{i}(x)\leq1\right\}
\]
exists, we say that $u$ is $F$-\textbf{normed}, or $F$-\textbf{normable}, and
we call $\left\Vert u\right\Vert _{F}$ the $F$\textbf{-norm} of $u$. In that
case,
\begin{equation}
\left\Vert u\right\Vert _{F}=\sup\left\{  \left\vert u(x)\right\vert :x\in
X,\,\sum_{i\in F}p_{i}(x)=1\right\}  \label{zz1}%
\end{equation}
provided the set on the right of (\ref{zz1}) is inhabited. If this holds for
all finitely enumerable $F\subset I$, then $u$ is \textbf{normed}, or
\textbf{normable}.

Note that if $u$ is $F$-normed, then
\[
\left\vert u(x)\right\vert \leq\left\Vert u\right\Vert _{F}\sum_{i\in F}%
p_{i}(x)\ \ \ (x\in X).
\]
Moreover, if $C>0$ and $\left\vert u(x)\right\vert \leq C\sum_{i\in F}%
p_{i}(x)$ for all $x\in X$, then $c\geq\left\Vert u\right\Vert _{F}$. For if
$C<\left\Vert u\right\Vert _{F}$, then there exists $x\in X$ such that
$\sum_{i\in F}p_{i}(x)\leq1$ and $C<\left\vert u(x)\right\vert \leq C$, which
is absurd. Hence%
\[
\left\Vert u\right\Vert _{F}=\inf\left\{  C>0:\left\vert u(x)\right\vert \leq
C\sum_{i\in F}p_{i}(x)\ \ (x\in X)\right\}  .
\]

The following is the locally convex space analogue of the standard criterion
for normability for linear functionals on normed spaces in \cite[Proposition
8, page 258]{Bishop},\cite[2.3.6]{BVtech}.

\begin{proposition}
\label{p33}Let $\left(  X,\left(  p_{i}\right)  _{i\in I}\right)  $ be a
locally convex space, and $u$ a nonzero continuous linear functional on $X$.
Let $F$ be a finitely enumerable subset of $I$, and $C>0$ such that
$\left\vert u(x)\right\vert \leq C\sum_{i\in F}p_{i}(x)$ for each $x\in X$.
Then $u$ is $F$-normed if and only if $\ker u$ is $F$-located in $X$.
\end{proposition}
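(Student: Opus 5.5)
The plan is to imitate the classical normed-space argument, using Lemma \ref{l32} to reduce $F$-locatedness of $\ker u$ to the existence of the infima $s_{x}$. Write $N(x)\equiv\sum_{i\in F}p_{i}(x)$. Since $u\neq0$, fix $a\in X$ with $u(a)\neq0$; replacing $a$ by $a/u(a)$ we may assume $u(a)=1$. The point $a$ is used in both directions to transfer information between the sets $u(\overline{B}^{F}(1))$ and $\ker u$.

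\emph{Normed implies located.} Suppose $\left\Vert u\right\Vert _{F}$ exists. It is positive: since $0<\left\vert u(a)\right\vert \leq\left\Vert u\right\Vert _{F}N(a)$, we get $\left\Vert u\right\Vert _{F}>0$. I claim that $\rho^{F}(x,\ker u)=\left\vert u(x)\right\vert /\left\Vert u\right\Vert _{F}$ for every $x\in X$.
\begin{itemize}
\item \emph{Lower bound.} For $y\in\ker u$ we have $\left\vert u(x)\right\vert =\left\vert u(x-y)\right\vert \leq\left\Vert u\right\Vert _{F}N(x-y)$.
\item \emph{Approximation.} Given $\varepsilon>0$, choose $z$ with $N(z)\leq1$ and $\left\vert u(z)\right\vert >\left\Vert u\right\Vert _{F}-\varepsilon$. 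Then $u(z)\neq0$, so we may put $y\equiv x-\left(u(x)/u(z)\right)z\in\ker u$. This gives $N(x-y)\leq\left\vert u(x)\right\vert /\left\vert u(z)\right\vert$, which is arbitrarily close to $\left\vert u(x)\right\vert /\left\Vert u\right\Vert _{F}$.
\end{itemize}
Constructively this exhibits the infimum directly, so Lemma \ref{l32} is not even needed in this direction.

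\emph{Located implies normed.} This is the main step. Suppose $\ker u$ is $F$-located, and set $d\equiv\rho^{F}(a,\ker u)$. By the last part of Lemma \ref{l32} (with $s_{a}=d$) we have $d>0$. I expect $\left\Vert u\right\Vert _{F}=1/d$ and will verify the two defining properties of a supremum.
\begin{itemize}
\item \emph{Upper bound.} For any $x$ and any $y\in\ker u$, write $x=u(x)a+(x-u(x)a)$, noting that $x-u(x)a\in\ker u$. If $u(x)\neq0$, then $a-(x-y)/u(x)\in\ker u$, so
\[
d\leq N\left(a-\left(a-(x-y)/u(x)\right)\right)=N(x-y)/\left\vert u(x)\right\vert .
\]
Taking $y=0$ gives $\left\vert u(x)\right\vert \leq N(x)/d$. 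The case $u(x)=0$ cannot be decided constructively, so I argue by contradiction: if $\left\vert u(x)\right\vert >N(x)/d$, then $u(x)\neq0$ and the bound just proved applies, which is absurd. Hence $\left\vert u(x)\right\vert \leq N(x)/d$ for all $x$.
\item \emph{Approximation.} Given $\varepsilon>0$, pick $y\in\ker u$ with $N(a-y)<d+\varepsilon$. Put $z\equiv(a-y)/N(a-y)$; here $N(a-y)\geq d>0$, so this is defined. Then $N(z)=1$ and $\left\vert u(z)\right\vert =1/N(a-y)>1/(d+\varepsilon)$, which is arbitrarily close to $1/d$.
\end{itemize}
Thus the supremum exists and equals $1/d$.

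The only delicate points are constructive ones. First, we must secure $d>0$ from Lemma \ref{l32} (this is where the hypothesis on $C$ enters). Second, we must handle the undecidable case $u(x)=0$ by the contradiction argument above rather than by splitting into cases. Given these, both directions are routine estimates.
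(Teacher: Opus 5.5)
Your proof is correct and follows the paper's argument. The forward direction is the same computation; your use of $|u(z)|$ with $N(z)\le 1$ handles complex scalars more cleanly, though you should take $\varepsilon<\left\Vert u\right\Vert_F$ to guarantee $u(z)\neq0$. In the converse you work with $d=\rho^{F}(a,\ker u)$ and explicit kernel elements where the paper works with the infimum $s$ of Lemma \ref{l32}, which that lemma shows is the same number, and your contradiction argument for the upper bound plays the role of the paper's approximate splitting ``either $|u(x)|<1/s$ or $u(x)\neq0$''.
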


\begin{proof}
Suppose first that $u$ is $F$-normed; then since $u$ is nonzero, $\left\Vert
u\right\Vert _{F}>0$. Consider any $a\in X$. For each $y\in\ker u$ we have%
\[
\sum_{i\in F}p_{i}(a-y)\geq\frac{\left\vert u(a-y)\right\vert }{\left\Vert
u\right\Vert _{F}}=\frac{\left\vert u(a)\right\vert }{\left\Vert u\right\Vert
_{F}}.
\]
On the other hand, if $0<\varepsilon<\left\Vert u\right\Vert _{F}$ and we
choose $x\in X$ such that $\sum_{i\in F}p_{i}(x)=1$ and $u(x)>\left\Vert
u\right\Vert _{F}-\varepsilon$, then%
\[
z\equiv a-\frac{u(a)}{u(x)}x\in\ker u
\]
and%
\[
\sum_{i\in F}p_{i}(a-z)=\sum_{i\in F}p_{i}\left(  \frac{u(a)}{u(x)}x\right)
=\frac{u(a)}{u(x)}\sum_{i\in F}p_{i}\left(  x\right)  =\frac{u(a)}{u(x)}%
<\frac{\left\vert u(a)\right\vert }{\left\Vert u\right\Vert _{F}-\varepsilon
}.
\]
Since $\varepsilon$ is arbitrary, it follows that $\inf_{y\in\ker u}\sum_{i\in
F}p_{i}(a-y)$ exists and equals $\left\vert u(a)\right\vert /\left\Vert
u\right\Vert _{F}$. Since $a$ is arbitrary, we see that $\ker u$ is $F$-located.

Conversely, suppose that $\ker u$ is $F$-located. Since $u$ is nonzero, there
exists $x_{0}$ with $u(x_{0})=1$. Thus, by Lemma \ref{l32},%
\[
s\equiv\inf\left\{  t>0:1\in tu(\overline{B}^{F}(1))\right\}
\]
exists and is positive. We show that $\left\Vert u\right\Vert _{F}$ equals
$1/s$. For each $x\in\overline{B}^{F}(1)$ we have either $u(x)<1/s$ or
$u(x)\neq0$. In the latter case,
\[
\sum_{i\in F}p_{i}\left(  \frac{\left\vert u(x)\right\vert }{u(x)}x\right)
\leq1
\]
and%
\[
1=\frac{1}{\left\vert u(x)\right\vert }u\left(  \frac{\left\vert
u(x)\right\vert }{u(x)}x\right)  ,
\]
so $1/\left\vert u(x)\right\vert \geq s$ and therefore $\left\vert
u(x)\right\vert \leq1/s$. On the other hand, by definition of $s$, if
$0<\varepsilon<1/s$, then there exist $t$ with $s\leq t<s/\left(
1-\varepsilon s\right)  $, and $z\in\overline{B}^{F}(1)$, such that $tu(z)=1$.
Then $u(z)=1/t>1/s>1/s-\varepsilon$. Since $\varepsilon$ is arbitrary, it
follows that $\left\Vert u\right\Vert _{F}$ exists and equals $1/s$.
\end{proof}

\section{Total boundedness in locally convex spaces}

Let $S$ be an inhabited subset of our locally convex space\footnote{%
\normalfont\sf
The work in this section up to the end of the proof of Corollary \ref{c22}~can
readily be adapted to apply not just to locally convex spaces, but to sets
with a uniform structure defined by a family of pseudonorms. Such spaces are
introduced in Problems 17--20 on pages 110-111 of \cite{Bishop}.} $X$, let
$F\subset I$ be finitely enumerable, and let $\varepsilon>0$. By an $\left(
F,\varepsilon\right)  $\textbf{-approximation} to $S$ we mean a set $T\subset
S$ such that for each $x\in S$ there exists $y\in T$ with $\sum_{i\in F}%
p_{i}(x-y)<\varepsilon$. We say that $S$ is $F$\textbf{-totally bounded} if
for each $\varepsilon>0$ there exists a finitely enumerable $\left(
F,\varepsilon\right)  $-approximation to $S$. If $S$ is $F $-totally bounded
for each finitely enumerable set $F\subset I$, then we say that $S$ is
\textbf{totally bounded }in $X$. If $\left(  X,\left\Vert \ \right\Vert
\right)  $ is a normed linear space, then the total boundedness of a subset of
the locally convex space $X$ is just total boundedness in the usual sense
relative to the metric induced on $X$ by $\left\Vert \ \right\Vert $.

\begin{proposition}
\label{feb02p1}Let $\left(  X,\left(  p_{i}\right)  _{i\in I}\right)  $ be a
locally convex space, $F$ a finitely enumerable subset of $I$, and $S$ an
$F$-totally bounded subset of $X$. Then $S$ is $F$-\textbf{bounded}, in the
sense that there exists $c>0$ such that $\sum_{i\in F}p_{i}(x)\leq c$ for all
$x\in S$.
\end{proposition}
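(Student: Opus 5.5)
The plan is to imitate the standard metric argument that a totally bounded set is bounded, using a single finitely enumerable approximation. Take $\varepsilon=1$. Since $S$ is $F$-totally bounded, there is a finitely enumerable $(F,1)$-approximation $T=\{y_{1},\ldots,y_{n}\}\subset S$ to $S$. Here $T$ is inhabited because $S$ is inhabited and the paper's convention makes finitely enumerable sets inhabited. The possible repetitions in the enumeration cause no difficulty, since we only ever take a maximum over the listed points.

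The key quantity is
\[
c\equiv1+\max_{1\leq k\leq n}\sum_{i\in F}p_{i}(y_{k}).
\]
This is a finite maximum of real numbers, so it exists constructively. Moreover $c\geq1>0$.

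To check the bound, take any $x\in S$ and choose $k$ with $\sum_{i\in F}p_{i}(x-y_{k})<1$. Then the triangle inequality for each seminorm $p_{i}$, summed over $F$, gives
\[
\sum_{i\in F}p_{i}(x)\leq\sum_{i\in F}p_{i}(x-y_{k})+\sum_{i\in F}p_{i}(y_{k})<1+\sum_{i\in F}p_{i}(y_{k})\leq c.
\]
This is the required inequality $\sum_{i\in F}p_{i}(x)\leq c$ for every $x\in S$.

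There is no real obstacle here. The only points needing constructive care are that the maximum over a finitely enumerable set exists, and that we never need to decide which $y_{k}$ is closest to $x$: any witness supplied by the approximation property suffices.
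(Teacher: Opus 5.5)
Your proposal is correct and follows essentially the same argument as the paper: take a finitely enumerable $(F,1)$-approximation and use the triangle inequality for the seminorms. The only difference is that the paper takes $c = 1+\sum_{i\in F}\sum_{j=1}^{N}p_{i}(x_{j})$, a sum over the approximating points, where you take a maximum; this makes no difference to the argument.
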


\begin{proof}
Choose a finitely enumerable $\left(  F,1\right)  $-approximation $\left\{
x_{1},\ldots,x_{N}\right\}  $ to $S$, and let%
\[
c=1+\sum_{i\in F}\sum_{j=1}^{N}p_{i}(x_{j}).
\]
Given $x\in S$, choose $k$ such that $\sum_{i\in F}p_{i}(x-x_{k})<1$. Then
\begin{align*}
\sum_{i\in F}p_{i}(x) &  \leq\sum_{i\in F}(p_{i}(x-x_{k})+p_{i}(x_{k}))\\
&  \leq\sum_{i\in F}p_{i}(x-x_{k})+\sum_{i\in F}p_{i}(x_{k})\leq1+\sum_{i\in
F}\sum_{j=1}^{N}p_{i}(x_{j})=c
\end{align*}
%

\hfill

\end{proof}

\begin{proposition}
\label{jan20p2}If $S$ is a totally bounded subset of the locally convex space
$X$, and $f$ is a uniformly continuous mapping of $S$ into the locally convex
space space $\left(  Y,\left(  q_{j}\right)  _{j\in J}\right)  $, then $f(X)$
is totally bounded in $Y$.
\end{proposition}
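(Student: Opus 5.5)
We will show that $f(S)$ is totally bounded in $Y$; the $f(X)$ in the statement is evidently a slip for $f(S)$, since $f$ is only defined on $S$. The plan is the standard metric argument, done one finitely enumerable index set at a time.

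Fix a finitely enumerable $G\subset J$ and $\varepsilon>0$. We must produce a finitely enumerable $(G,\varepsilon)$-approximation to $f(S)$. Uniform continuity gives a finitely enumerable $F\subset I$ and $\delta>0$ such that
\[
x,y\in S,\ \sum_{i\in F}p_{i}(x-y)<\delta\ \Rightarrow\ \sum_{j\in G}q_{j}(f(x)-f(y))<\varepsilon .
\]
Here we read the conclusion of the definition of uniform continuity with the sum over $G$, not over $J$, as is clearly intended. Since $S$ is totally bounded, it is in particular $F$-totally bounded. So there is a finitely enumerable $(F,\delta)$-approximation $\{x_{1},\ldots,x_{N}\}\subset S$ to $S$.

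We then take $T\equiv\{f(x_{1}),\ldots,f(x_{N})\}$. This set is finitely enumerable, being the image of a finitely enumerable set, and it is contained in $f(S)$. It is inhabited because $S$ is inhabited; this is needed because of the convention that finitely enumerable sets are inhabited. Now let $z\in f(S)$. Choose $x\in S$ with $z=f(x)$, and then $k$ with $\sum_{i\in F}p_{i}(x-x_{k})<\delta$. Uniform continuity gives $\sum_{j\in G}q_{j}(z-f(x_{k}))<\varepsilon$. Hence $T$ is a $(G,\varepsilon)$-approximation to $f(S)$.

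Since $G$ and $\varepsilon$ were arbitrary, $f(S)$ is $G$-totally bounded for every finitely enumerable $G\subset J$, and so it is totally bounded in $Y$. There is no real obstacle in this argument. The one point to watch is that the index set $F$ supplied by uniform continuity depends on $G$. This is harmless precisely because the hypothesis is full total boundedness of $S$, that is, $F$-total boundedness for every finitely enumerable $F$. Under mere $F_{0}$-total boundedness of $S$ for a single fixed $F_{0}$, the argument would break down.
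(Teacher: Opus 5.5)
Your proof is correct and is essentially the paper's argument: obtain $F$ and $\delta$ from uniform continuity, take a finitely enumerable $(F,\delta)$-approximation $T$ to $S$, and observe that $f(T)$ is a $(G,\varepsilon)$-approximation to $f(S)$. Reading $f(X)$ as $f(S)$, and the sum over $J$ as a sum over $G$, is also right; both are typos in the source.
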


\begin{proof}
Let $G\subset J$ be finitely enumerable and $\varepsilon>0$. There exist a
finitely enumerable subset $F$ of $I$ and $\delta>0$ such that if $x,y\in S$
and $\sum_{i\in F}p_{i}(x-y)<\delta$, then $\sum_{j\in J}q_{j}%
(f(x)-f(y))<\varepsilon$. Let $T$ be a finitely enumerable $(F,\delta
)$-approximation to $S$. Then for each $x\in S$ there exists $x^{\prime}\in T$
such that $\sum_{i\in F}p_{i}(x-x^{\prime})<\delta$ and therefore $\sum_{j\in
J}q_{j}(f(x)-f(x^{\prime}))<\varepsilon$. Thus $f(T)$ is a finitely enumerable
$\left(  G,\varepsilon\right)  $-approximation to $f(S)$.
\end{proof}

\begin{corollary}
\label{jan22c1}Let $u$ be a continuous linear mapping of the locally convex
space $\left(  X,\left(  p_{i}\right)  _{i\in I}\right)  $ into the locally
convex space space $\left(  Y,\left(  q_{j}\right)  _{j\in J}\right)  $. If
$S\subset X$ is totally bounded, then $u(S)$ is totally bounded in $Y$.
\end{corollary}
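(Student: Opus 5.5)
The plan is to combine Proposition \ref{jan20p1} with Proposition \ref{jan20p2}. Since $u$ is a continuous linear mapping of $X$ into $Y$, the implication (b)$\ \Rightarrow\ $(c) of Proposition \ref{jan20p1} shows that $u$ is uniformly continuous on $X$. In particular its restriction to $S$ is uniformly continuous on $S$: the same finitely enumerable $F\subset I$ and $\delta>0$ that work for a given $G$ and $\varepsilon$ on all of $X$ also work for points $x,y\in S$.

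Proposition \ref{jan20p2}, applied to this restriction, should then give that $u(S)$ is totally bounded in $Y$. There is a small slip in the statement of that proposition, which reads ``$f(X)$'' where ``$f(S)$'' is meant. Its proof does establish the conclusion for $f(S)$, since it produces a finitely enumerable $(G,\varepsilon)$-approximation $f(T)$ to $f(S)$. I will therefore cite the proposition in the form actually proved.

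For completeness, I would write out the one substantive step directly rather than rely only on the citation. Let $G\subset J$ be finitely enumerable and let $\varepsilon>0$. Proposition \ref{jan20p1}(d) gives $C>0$ and a finitely enumerable $F\subset I$ such that
\[
\sum_{j\in G}q_{j}(u(x))\leq C\sum_{i\in F}p_{i}(x)\qquad(x\in X).
\]
Here I read the sum over $J$ in (\ref{30a}) as a sum over $G$, as clearly intended. Take a finitely enumerable $(F,\varepsilon/C)$-approximation $T$ to $S$. For each $x\in S$ choose $x'\in T$ with $\sum_{i\in F}p_{i}(x-x')<\varepsilon/C$. Linearity of $u$ then gives
\[
\sum_{j\in G}q_{j}(u(x)-u(x'))=\sum_{j\in G}q_{j}(u(x-x'))<\varepsilon .
\]
So $u(T)$ is a finitely enumerable $(G,\varepsilon)$-approximation to $u(S)$, and $u(T)\subset u(S)$ as the definition requires.

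I expect no real obstacle. The only points needing care are to read the typographical slips correctly (the sums over $J$ versus $G$, and $f(X)$ versus $f(S)$), and to note that the image of a finitely enumerable set under a function is again finitely enumerable and inhabited.
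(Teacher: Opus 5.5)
Your proof is correct and follows the paper's route: Proposition \ref{jan20p1} gives uniform continuity of $u$, hence of its restriction to $S$, and Proposition \ref{jan20p2} then applies; you are right that its conclusion should read $f(S)$ rather than $f(X)$. Your explicit argument via condition (d), with the sum taken over $G$ rather than $J$, simply unfolds those two citations and is also valid.
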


\begin{proof}
By Proposition \ref{jan20p1}, $u$ is uniformly continuous on $X$, so the
restriction of $u$ to $S$ is uniformly continuous. It remains to apply
Proposition \ref{jan20p2}.
\end{proof}

\begin{corollary}
\label{jan20c2}If $S$ is a totally bounded subset of the locally convex space
$X$, and $f$ is a uniformly continuous mapping of $S$ into $\mathbb{R}$, then
$\sup_{x\in S}f(x)$ and $\inf_{x\in S}f(x)$ exist.
\end{corollary}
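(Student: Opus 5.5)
By Proposition \ref{jan20p2}, applied with $Y=\mathbb{R}$ carrying the single defining seminorm $x\rightsquigarrow\left\vert x\right\vert$, the set $f(S)$ is totally bounded in $\mathbb{R}$. Total boundedness in this locally convex space is the same as metric total boundedness, because the only finitely enumerable index sets are (enumerations of) the singleton. Moreover $f(S)$ is inhabited, since $S$ is. So the plan reduces to the standard constructive fact that an inhabited totally bounded subset of $\mathbb{R}$ has a supremum and an infimum. Because $\inf_{x\in S}f(x)=-\sup_{x\in S}(-f(x))$ and $-f$ is also uniformly continuous, I only need to handle the supremum.

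For the supremum, let $A=f(S)$. For each $n$ I will choose a finitely enumerable $(1/n)$-approximation $\{a_{n,1},\ldots,a_{n,k_n}\}\subset A$, and set $\sigma_n=\max_k a_{n,k}$. This maximum exists constructively, since the set is finitely enumerable and the max of finitely many reals exists.

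I claim that every $a\in A$ satisfies $a<\sigma_n+1/n$. Indeed, some $a_{n,k}$ has $\left\vert a-a_{n,k}\right\vert<1/n$, and then $a<a_{n,k}+1/n\leq\sigma_n+1/n$.

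Next I check that $(\sigma_n)$ is Cauchy. Since $\sigma_m$ is itself one of the points of $A$, the claim gives $\sigma_m<\sigma_n+1/n$, and symmetrically $\sigma_n<\sigma_m+1/m$. Hence $\left\vert\sigma_m-\sigma_n\right\vert\leq 1/m+1/n$, and the sequence converges to some $\sigma\in\mathbb{R}$ with $\left\vert\sigma-\sigma_n\right\vert\leq 1/n$.

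Finally I verify the two defining properties of the supremum. For the upper bound: each $a\in A$ satisfies $a\leq\sigma_n+1/n\leq\sigma+2/n$ for every $n$, so $a\leq\sigma$. For approximation from below: given $\varepsilon>0$, I pick $n$ with $1/n<\varepsilon$; then $\sigma_n\in A$ and $\sigma_n\geq\sigma-1/n>\sigma-\varepsilon$. This identifies $\sigma$ as $\sup_{x\in S}f(x)$.

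The only step needing care is the Cauchy estimate, and specifically the observation that $\sigma_m$ lies in $A$. That observation is exactly why an approximation is required to be a subset of the set itself ($T\subset S$ in the definition of an $(F,\varepsilon)$-approximation). The rest is routine.
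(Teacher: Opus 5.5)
Your reduction is the same as the paper's: Proposition \ref{jan20p2} makes $f(S)$ totally bounded in $\mathbb{R}$ in the metric sense. The paper then simply cites the standard fact that such a set has a supremum and an infimum. You instead re-prove that fact, and the re-proof has a genuine constructive gap at exactly the step you single out, namely the assertion that $\sigma_m=\max_k a_{m,k}$ belongs to $A$.

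The approximation $\{a_{m,1},\ldots,a_{m,k_m}\}$ is a subset of $A$, but constructively the maximum of finitely many reals need not be one of them. Already $\max(x,0)\in\{x,0\}$ says that $x\geq 0$ or $x\leq 0$, and asserting this for all real $x$ is equivalent to LLPO. Concretely, take $X=\mathbb{R}$, $S=\{0,x\}$ and $f$ the identity. Then $\sigma_n=\max(0,x)$, and $\sigma_n\in f(S)$ is exactly that disjunction. So neither of your two uses of this assertion is justified as written. The Cauchy estimate uses it. So does the approximation-from-below step, where the definition of supremum requires an actual element of $f(S)$ exceeding $\sigma-\varepsilon$, and $\sigma_n$ is not known to be one.

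Both steps are easily repaired without putting $\sigma_m$ in $A$.

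\emph{Cauchy estimate.} Apply your claim to each $a_{m,k}$, which does lie in $A$. This gives $a_{m,k}<\sigma_n+1/n$ for every $k$, hence $\sigma_m\leq\sigma_n+1/n$, and the rest of that step goes through.

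\emph{Approximation from below.} Use the constructive fact that if $m=\max_k a_{n,k}>c$, then $a_{n,k}>c$ for some $k$. To see it, use the property that $a<b$ implies $y>a$ or $y<b$ for every real $y$. Applied with $a=c$ and $b=(c+m)/2$, it decides for each $k$ that either $a_{n,k}>c$ or $a_{n,k}<(c+m)/2$. The second alternative cannot occur for every $k$, since then $m<(c+m)/2<m$. Now choose $n$ with $1/n<\varepsilon$, so that $\sigma_n\geq\sigma-1/n>\sigma-\varepsilon$. Applying the fact with $c=\sigma-\varepsilon$ yields a genuine $a_{n,k}\in f(S)$ with $a_{n,k}>\sigma-\varepsilon$.

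With these two changes your argument is a correct, self-contained proof of the fact the paper cites. The requirement $T\subset S$ in the definition of an approximation is indeed what is needed, but it is used for the witnesses $a_{n,k}$, not for their maximum.
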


\begin{proof}
By Proposition \ref{jan20p2} and the remark immediately preceding that
proposition, $f(S)$ is totally bounded in $\mathbb{R}$ in the usual metric
sense. Hence \cite[Proposition 2.2.5]{BVtech} can be applied.
\end{proof}

\begin{proposition}
\label{jan22p1}A totally bounded subset $K$ of the locally convex space $X$ is located.
\end{proposition}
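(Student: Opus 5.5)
To show that $K$ is located, we fix $x\in X$ and a finitely enumerable $F\subset I$, and show that $\rho^{F}(x,K)=\inf\{\sum_{i\in F}p_{i}(x-y):y\in K\}$ exists. The natural route is Corollary \ref{jan20c2}: we take the function $f(y)\equiv\sum_{i\in F}p_{i}(y-x)$ on $K$. By Proposition \ref{jan22p2} (with $s=x$), $f$ is uniformly continuous on $X$ as a map into $\mathbb{R}$, and hence so is its restriction to $K$. Since $K$ is totally bounded, Corollary \ref{jan20c2} gives that $\inf_{y\in K}f(y)$ exists. Because each $p_{i}$ is a seminorm, $p_{i}(y-x)=p_{i}(x-y)$, so this infimum is exactly $\rho^{F}(x,K)$. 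As $x$ and $F$ are arbitrary, $K$ is located.

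The only point needing care is that Corollary \ref{jan20c2} passes through Proposition \ref{jan20p2}, whose conclusion concerns $\mathbb{R}$ with the single seminorm $|\cdot|$. We must check that uniform continuity of $f$ in the paper's sense (for each $\varepsilon>0$ some finitely enumerable $F'$ and $\delta>0$ work) holds: we simply take $F'=F$ and $\delta=\varepsilon$, as in the proof of Proposition \ref{jan22p2}. Then $f(K)$ is totally bounded in the metric sense, and the infimum exists by the cited metric result.

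Alternatively, for a self-contained argument, we can use only $F$-total boundedness of $K$. For each $n$ we take a finitely enumerable $(F,1/n)$-approximation $T_{n}$ to $K$ and set $m_{n}=\min_{y\in T_{n}}f(y)$; this minimum exists since $T_{n}$ is finitely enumerable and the minimum of finitely many reals is computable even with repetitions. Using the estimate $|f(y)-f(y')|\leq\sum_{i\in F}p_{i}(y-y')$, we get $|m_{n}-m_{k}|\leq 1/n+1/k$, so $(m_{n})$ is Cauchy. Its limit is a lower bound for $f(K)$ and is approximated from above by values of $f$, hence it is the infimum. There is no real obstacle here; the only subtlety is avoiding case distinctions on reals, which the min-over-finite-sets construction handles.
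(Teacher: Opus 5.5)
Your main argument is the paper's proof: apply Proposition \ref{jan22p2} with $s=x$ to get uniform continuity of $y\rightsquigarrow\sum_{i\in F}p_{i}(x-y)$ on $K$, then use Corollary \ref{jan20c2} to obtain the infimum $\rho^{F}(x,K)$. Your alternative direct argument is also correct, and it shows slightly more: $F$-total boundedness of $K$ alone already guarantees that $\rho^{F}(x,K)$ exists.
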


\begin{proof}
It follows from Proposition \ref{jan22p2} and Corollary \ref{jan20c2} that if
$x\in X$ and $F\subset I$ is finitely enumerable, then $\rho^{F}(x,K)$ exists.
\end{proof}

\begin{proposition}
\label{jan23p1}Let $K$ be a totally bounded subset of the locally convex space
$X$, and let $S\subset K$ be located in $K$. Then $S$ is totally bounded.
\end{proposition}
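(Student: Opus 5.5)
We adapt the standard metric-space argument (\cite[Proposition 2.2.6]{BVtech}-type) one finitely enumerable index set at a time. Fix a finitely enumerable $F\subset I$ and $\varepsilon>0$; we must produce a finitely enumerable $(F,\varepsilon)$-approximation to $S$. Here ``$S$ located in $K$'' is read as: for each $x\in K$ and each finitely enumerable $F$, the infimum $\rho^{F}(x,S)$ exists. We also need $S$ inhabited for total boundedness to make sense, and we assume this, as in the standing convention of this section.

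First choose a finitely enumerable $(F,\varepsilon/3)$-approximation $\{x_{1},\ldots,x_{N}\}$ to $K$. For each $k$ the real number $\rho^{F}(x_{k},S)$ exists. By the constructive dichotomy for reals, for each $k$ we can decide either $\rho^{F}(x_{k},S)<2\varepsilon/3$ or $\rho^{F}(x_{k},S)>\varepsilon/3$. Let $P$ be the set of those $k$ for which the first alternative holds. For each $k\in P$ choose $y_{k}\in S$ with $\sum_{i\in F}p_{i}(x_{k}-y_{k})<2\varepsilon/3$. If $P$ happens to be empty we obtain a contradiction below, using an element of $S$, so $P$ is inhabited.

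Next we claim that $\{y_{k}:k\in P\}$ is an $(F,\varepsilon')$-approximation to $S$ for a suitable $\varepsilon'$. Given $x\in S\subset K$, pick $k$ with $\sum_{i\in F}p_{i}(x-x_{k})<\varepsilon/3$. Then $\rho^{F}(x_{k},S)<\varepsilon/3$, so the second alternative is impossible and $k\in P$. Hence
\[
\sum_{i\in F}p_{i}(x-y_{k})\leq\sum_{i\in F}p_{i}(x-x_{k})+\sum_{i\in F}p_{i}(x_{k}-y_{k})<\varepsilon/3+2\varepsilon/3=\varepsilon.
\]
Applying this to some element of $S$ shows $P$ is inhabited, as promised. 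So $\{y_{k}:k\in P\}$ is an $(F,\varepsilon)$-approximation to $S$, and it is a subset of $S$, as the definition requires. Since $F$ and $\varepsilon$ are arbitrary, $S$ is totally bounded.

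The only delicate point is the constructive bookkeeping. The index set $P$ must be genuinely finitely enumerable, which holds because it is obtained by a decidable choice on each index of a finite list. The approximating points must lie in $S$, which is why we use the $y_{k}$ and not the $x_{k}$. The triangle inequality for the finite sum of seminorms is routine, as in Proposition \ref{feb02p1}. Notably, the argument needs only the $F$-locatedness of $S$ for the same $F$, which matches the $F$-wise flavour of the paper.
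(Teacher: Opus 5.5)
Your proof is correct and is essentially the paper's argument: you take an $(F,\varepsilon/3)$-approximation $\{x_1,\ldots,x_n\}$ to $K$, split the indices according to whether $\rho^F(x_k,S)<2\varepsilon/3$ or $\rho^F(x_k,S)>\varepsilon/3$, pick $s_k\in S$ within $2\varepsilon/3$ of $x_k$ for the first class, and finish with the triangle inequality. Your explicit check that $P$ is inhabited, by running the argument on an element of $S$, is a small detail the paper leaves implicit; the stray $\varepsilon'$ in your claim is harmless, since you then prove the claim with $\varepsilon$ itself.
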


\begin{proof}
Let $F\subset I$ be finitely enumerable and $\varepsilon>0$. Construct a
finitely enumerable $\left(  F,\varepsilon/3\right)  $-approximation $\left\{
x_{1},\ldots,x_{n}\right\}  $ to $K$, and for each $k\leq n$ let $\rho
_{k}=\rho^{F}(x_{k},S)$. Write $\left\{  1,\ldots,n\right\}  $ as a union of
two sets $P$ and $Q$ where $\rho_{k}<2\varepsilon/3$ if $k\in P$, and
$\rho_{k}>\varepsilon/3$ if $k\in Q$. For each $k\in P$ there exists $s_{k}\in
S$ such that $\sum_{i\in F}p_{i}(x_{k}-s_{k})<2\varepsilon/3$. Given $s\in S$,
choose $k$ such that $\sum_{i\in F}p_{i}(s-x_{k})<\varepsilon/3$. Then
$\rho_{k}<\varepsilon/3$, so $k\notin Q$ and therefore $k\in P$; whence
\begin{align*}
\sum_{i\in F}p_{i}(s-s_{k}) &  \leq\sum_{i\in F}\left(  p_{i}(s-x_{k}%
)+p_{i}(x_{k}-s_{k})\right)  \\
&  \leq\sum_{i\in F}p_{i}(s-x_{k})+\sum_{i\in F}p_{i}(x_{k}-s_{k}%
)<\frac{\varepsilon}{3}+\frac{2\varepsilon}{3}=\varepsilon.
\end{align*}
Thus $\left\{  s_{k}:k\in P\right\}  $ is a finitely enumerable $\left(
F,\varepsilon\right)  $-approximation to $S$. Since $F$ and $\varepsilon$ are
arbitrary, it follows that $S$ is totally bounded.
\end{proof}

%

\bigskip

If $K\subset X$ is $F$-totally bounded, we define%
\[
\mathsf{diam}^{F}(K)\equiv\sup\left\{  \sum_{i\in F}p_{i}(x-y):x,y\in
K\right\}  ,
\]
which exists by Proposition \ref{jan22p2} and Corollary \ref{jan20c2}.

Observe here that if $x,y\in X$, then for each $z\in K$,%
\[
\rho^{F}(x,K)\leq\sum_{i\in F}p_{i}(x-z)\leq\sum_{i\in F}p_{i}(x-y)+\sum_{i\in
F}p_{i}(y-z),
\]
and therefore%
\begin{align*}
\rho^{F}(x,K)  &  \leq\sum_{i\in F}p_{i}(x-y)+\inf\left\{  \sum_{i\in F}%
p_{i}(y-z):z\in K\right\} \\
&  =\sum_{i\in F}p_{i}(x-y)+\rho^{F}(y,K).
\end{align*}

The next three results and their proofs are locally-convex-space-analogues of
ones for metric spaces (see \cite[pages 42--44]{BVtech}).

\begin{proposition}
\label{p11}Let $S$ be a totally bounded subset of the locally convex space
$\left(  X,\left(  p_{i}\right)  _{i\in I}\right)  $, and $x_{0}\in S$. Let
$F$ be a finitely enumerable subset of $I$, and $r>0$. Then there exists a
closed $F$-totally bounded subset $K$ of $S$ such that $S\cap B^{F}%
(x_{0},r)\subset K\subset S\cap\overline{B}^{F}(x_{0},8r)$.
\end{proposition}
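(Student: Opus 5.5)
Throughout, $d(x,y)$ abbreviates $\sum_{i\in F}p_{i}(x-y)$. This is a pseudometric on $S$, and the proof mimics the metric-space result that a totally bounded metric space contains totally bounded balls (\cite[pages 42--44]{BVtech}). The strategy is to build a chain of finite $d$-approximations whose points are linked to $x_{0}$ by steps with geometrically decreasing lengths. We then let $K$ be the closure in $S$ of the points reached. Step lengths are chosen so that the total accumulated length stays below $8r$.

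\textbf{Construction.} For each $n\geq0$, choose a finitely enumerable $(F,r2^{-n})$-approximation $T_{n}$ to $S$; by adjoining $x_{0}$ we may assume $x_{0}\in T_{0}$. Define finitely enumerable sets $A_{n}\subset T_{n}$ inductively. Put $A_{0}=\{x_{0}\}$. Given $A_{n}$, for each $y\in T_{n+1}$ the finitely many reals $\{d(y,a):a\in A_{n}\}$ allow a decision, by approximate comparison, of either
\[
\exists a\in A_{n}\ \left(d(y,a)<3r2^{-n}\right)
\quad\text{or}\quad
\forall a\in A_{n}\ \left(d(y,a)>2r2^{-n}\right).
\]
Actually the base step must be $d(y,x_{0})<2r$ versus $>r$, so that every point of $B^{F}(x_{0},r)$ is captured. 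We put $y$ into $A_{n+1}$ in the first case. Any point of $A_{n}$ is then within $\sum_{k}3r2^{-k}$-type bounds of $x_{0}$. The constants $2,3$ will be tuned so that the total is at most $8r$, e.g.\ $2r+\sum_{n\ge1}3r2^{-n}\le 5r$, which leaves room. Set $K_{0}=\bigcup_{n}A_{n}$ and let $K$ be the closure of $K_{0}$ within $S$, i.e.\ the set of $x\in S$ with $\inf_{n}\rho^{F}(x,A_{n})=0$, or more precisely $\rho^{F}(x,A_{n})\to0$.

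\textbf{Verification.} (i) We have $K\subset\overline{B}^{F}(x_{0},8r)$ by the telescoping estimate and continuity from Proposition \ref{jan22p2}.

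(ii) $K$ is $F$-totally bounded. Each $z\in A_{m}$ with $m>n$ lies within $\sum_{k\ge n}3r2^{-k}=6r2^{-n}$ of some point of $A_{n}$, following the chain backwards. Hence $A_{n}$ is a finitely enumerable $(F,7r2^{-n})$-approximation to $K$.

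(iii) $S\cap B^{F}(x_{0},r)\subset K$. Given $x$ in this set, pick $y_{n}\in T_{n}$ with $d(x,y_{n})<r2^{-n}$. Then $d(y_{0},x_{0})<2r$, and the comparison would put $y_{0}$ into the appropriate set, so $y_0\in A_0$-level. Inductively, $d(y_{n+1},y_{n})<r2^{-n}+r2^{-n-1}<2r2^{-n}$. So the alternative ``$>2r2^{-n}$ for all $a$'' is refuted, whence $y_{n+1}\in A_{n+1}$. Thus $\rho^{F}(x,A_{n})\to0$ and $x\in K$.

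(iv) $K$ is closed, meaning closed in the $F$-sense within $S$; this is a diagonal argument using that $K$ is defined by a limit condition. We also need $K\subset S$, which requires each $T_{n}\subset S$; this holds since approximations are subsets of $S$.

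\textbf{Main obstacle.} The main difficulty is the constructive decision step and the bookkeeping of constants. The dichotomy must be exhaustive, which approximate comparison of reals gives. It must also be strong enough that the chain from $x$ never gets rejected, and the two thresholds must leave a gap. I expect to choose thresholds $c_{1}r2^{-n}<c_{2}r2^{-n}$ with $c_{1}$ exceeding the $2r2^{-n}$ bound from (iii), and then check that $\sum c_{2}r2^{-n}\le 8r$. For example $c_{1}=2$ and $c_{2}=3$ with an initial radius $2r$ give a total of $2r+3r\cdot\sum_{n\ge0}2^{-n}=8r$. This is presumably where the $8$ comes from.
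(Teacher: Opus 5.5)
Your construction is essentially the paper's. There too one starts from $\{x_{0}\}$ and extracts the next finite set from a finer $F$-approximation by a two-threshold comparison with the previous set; the paper uses $\rho^{F}(x_{k},S_{n})<2^{-n+3}r$ versus $\rho^{F}(x_{k},S_{n})>2^{-n+2}r$. The paper also proves $S\cap B^{F}(x_{0},r)\subset K$ by the forward chain of your (iii), gets the radius bound and $F$-total boundedness by chaining backwards, and takes $K$ to be a closure of the union. Your thresholds $3r2^{-n}$ versus $2r2^{-n}$ simply give the bound $6r$ and $(F,7r2^{-n})$-approximations. The paper's $8r$ is $\sum_{n\geq1}2^{-n+3}r$, not the accounting you guess at.

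Two points in your write-up need repair.

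First, drop the remark about a special base step and the claim that $y_{0}$ lies in ``$A_{0}$-level''. Since $A_{0}=\{x_{0}\}$, there is no reason for $y_0$ to lie in it. Moreover, under a test of the form $d(y,x_{0})<2r$ versus $d(y,x_{0})>r$, knowing $d(y_{0},x_{0})<2r$ (or $d(y_{1},x_{0})<3r/2$) does not refute the second alternative. So the base of your induction in (iii) is unjustified as written. Your uniform rule already handles $n=0$: start the chain at $x_{0}\in A_{0}$ and note that $d(y_{1},x_{0})\leq d(y_{1},x)+d(x,x_{0})<r/2+r<2r$. This refutes the second alternative, so $y_{1}\in A_{1}$, and the induction then runs as you wrote.

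Second, an $(F,\varepsilon)$-approximation to $K$ must be a subset of $K$, so (ii) needs $A_{n}\subset K$. With your limit definition of $K$ this is not automatic; in the paper it is, because $K$ is the closure of $\bigcup_{n}S_{n}$. It does follow from the same forward chain: started at any $a\in A_{n}$, it gives $\rho^{F}(a,A_{m})<r2^{-m}$ for all $m\geq n$. Started at $x_{0}$, it also shows that every $A_{n}$ is inhabited, which you need for $\rho^{F}(\cdot,A_{n})$ to exist.

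Finally, your choice $K=\{x\in S:\rho^{F}(x,A_{n})\rightarrow0\}$, closed relative to $S$, is the right reading of the statement and is slightly more careful than the paper's. The closure of $\bigcup_{n}S_{n}$ in $X$ need not lie in $S$. For $S=(0,1]\subset\mathbb{R}$, $x_{0}=1/2$, $r=1$, no set closed in $\mathbb{R}$ satisfies the required inclusions. In addition, the paper's closure is taken with respect to all finitely enumerable index sets, whereas its property (a) only gives closeness in the $F$-seminorm sum. Closedness of your $K$ in $S$ follows directly from $\rho^{F}(x,A_{n})\leq d(x,z)+\rho^{F}(z,A_{n})$; no diagonal argument is needed.
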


\begin{proof}
With $S_{1}\equiv\{x_{0}\},$ we construct inductively a sequence
$(S_{n})_{n\geqslant1}$ of finitely enumerable subsets of $S$ such that

\begin{itemize}
\item[(a)] $\rho^{F}(x,S_{n})<2^{-n+1}r$ for each $x$ in $S\cap B^{F}%
(x_{0},r)$, and

\item[(b)] \smallskip$\rho^{F}(x,S_{n})<2^{-n+3}r$ for each $x$ in $S_{n+1}.$
\end{itemize}

%

\noindent
To that end, assume that $S_{1},\ldots,S_{n}$ have been constructed with the
appropriate properties, and let $\{x_{1},\ldots,x_{N}\}$ be an $(F,2^{-n}%
r)$-approximation to $S.$ Write $\{1,\ldots,N\}$ as a union of subsets $A$ and
$B$ such that%
\begin{align*}
\rho^{F}(x_{k},S_{n})  &  <2^{-n+3}r\quad\mathrm{if}\,\,k\in A,\\
\rho^{F}(x_{k},S_{n})  &  >2^{-n+2}r\quad\mathrm{if}\,\,k\in B.
\end{align*}
Let%
\[
S_{n+1}\equiv\{x_{k}:k\in A\}.
\]
Clearly, $S_{n+1}$ satisfies (b). Let $x$ be any point of $S\cap B^{F}%
(x_{0},r).$ By the induction hypothesis, there exists $y$ in $S_{n}$ with
$\sum_{i\in F}p_{i}(x-y)<2^{-n+1}r.$ Choosing $k$ in $\{1,\ldots,N\}$ such
that $\sum_{i\in F}p_{i}(x-x_{k})<2^{-n}r,$ we have
\begin{align*}
\rho^{F}(x_{k},S_{n})  &  \leqslant\sum_{i\in F}p_{i}(x_{k}-y)\\
&  \leqslant\sum_{i\in F}p_{i}(x-x_{k})+\sum_{i\in F}p_{i}(x-y)\\
&  <2^{-n}r+2^{-n+1}r\\
&  <2^{-n+2}r.
\end{align*}
Thus $k\notin B$, so $k\in A$, which is therefore (inhabited and) finitely
enumerable. Moreover,
\[
\rho^{F}(x,S_{n+1})\leq\sum_{i\in F}p_{i}(x-x_{k})<2^{-(n+1)+1}r,
\]
so $S_{n+1}$ satisfies the appropriate instance of (a). This completes the
inductive construction. Letting $K$ be the closure of $\bigcup\limits_{n=1}%
^{\infty}S_{n}$ in $X$, we see from (a) that $S\cap B^{F}(x_{0},r)\subset K.$

Next we prove:

\begin{itemize}
\item[(*)] \emph{If }$m,n$\emph{\ are positive integers with }$m\geq n$\emph{,
and }$y\in S_{m}$\emph{, then there exists }$y_{n}\in S_{n}$\emph{\ such that
}$\sum_{i\in F}p_{i}(y-y_{n})<2^{-n+4}r$\emph{.}
\end{itemize}

%

\noindent
Given $y\in S_{m}$ and letting $y_{m}=y$, we see from (b) that for each $k$
with $n\leq k\leq m-1$ there exist points $y_{k}\in S_{k}$ such that
$\sum_{i\in F}p_{i}(y_{k+1}-y_{k})<2^{-k+3}r$. Then%
\[
\sum_{i\in F}p_{i}(y-y_{n})\leqslant\sum_{k=n}^{m-1}\sum_{i\in F}p_{i}%
(y_{k+1}-y_{k})<\sum_{k=n}^{\infty}2^{-k+3}r=2^{-n+4}r.
\]
In particular taking $n=1$, we see that $\rho^{F}(y,\left\{  x_{0}\right\}
)=\sum_{i\in F}p_{i}(y-y_{1})<2^{3}r~$for each $y\in\bigcup\limits_{i=1}%
^{\infty}S_{i}$; whence $\bigcup\limits_{n=1}^{\infty}S_{n}\subset S\cap
B^{F}(x_{0},8r)$ and therefore $K\subset\overline{B}^{F}(x_{0},8r)$. On the
other hand, given $x\in K$ and a positive integer $n$, we can find $m$ and
$y\in S_{m}$ such that $\sum_{i\in F}p_{i}(x-y)<2^{-n+4}r$. If $m<n$, then
$y\in\bigcup\limits_{k=1}^{n}S_{k\text{.}}$. If $m\geq n$, then by the remark
preceding this proposition,%
\[
\rho^{F}(x,S_{n})\leqslant\sum_{i\in F}p_{i}(x-y)+\rho^{F}(y,S_{n}%
)<2^{-n+4}r+2^{-n+4}r=2^{-n+5}r,
\]
the second inequality using (*). Thus there exists $z\in S_{n}\subset
\bigcup\limits_{k=1}^{n}S_{k\text{. }}$ such that $\sum_{i\in F}%
p_{i}(x-z)<2^{-n+5}r$. Putting together the two alternatives for $m$, we now
see that the finitely enumerable set $\bigcup\limits_{k=1}^{n}S_{k\text{. }}%
$is a $\left(  F,2^{-n+5}r\right)  $-approximation to $K$. Since $n$ is
arbitrary, we conclude that $K$ is $F$-totally bounded.
\end{proof}

\begin{corollary}
\label{c11}Let $S$ be a totally bounded subset of the locally convex space
$\left(  X,\left(  p_{i}\right)  _{i\in I}\right)  $, and $F$ a finitely
enumerable subset of $I$. Then for each $\varepsilon>0$ there exist finitely
many $F$-totally bounded sets $K_{1},\ldots,K_{N}$ such that $S=%
{\textstyle\bigcup_{n=1}^{N}}
K_{n}$ and $\mathsf{diam}^{F}(K_{n})\leq\varepsilon$ for each $n\leq N$.
\end{corollary}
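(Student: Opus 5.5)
The plan is to mimic the metric-space argument: cover $S$ by finitely many balls of small radius, and replace each ball by the closed $F$-totally bounded set supplied by Proposition \ref{p11}. Given $\varepsilon>0$, put $r=\varepsilon/16$. Since $S$ is totally bounded, it has a finitely enumerable $(F,r)$-approximation $\{x_{1},\ldots,x_{N}\}\subset S$. For each $n\leq N$, Proposition \ref{p11}, applied with centre $x_{n}\in S$, the given $F$ and radius $r$, yields a closed $F$-totally bounded set $K_{n}$ with
\[
S\cap B^{F}(x_{n},r)\subset K_{n}\subset S\cap\overline{B}^{F}(x_{n},8r).
\]

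The covering property is immediate. Each $K_{n}\subset S$, so $\bigcup_{n}K_{n}\subset S$. Conversely, for $x\in S$ there is some $n$ with $\sum_{i\in F}p_{i}(x-x_{n})<r$. Then $x\in S\cap B^{F}(x_{n},r)\subset K_{n}$, and hence $S=\bigcup_{n=1}^{N}K_{n}$.

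For the diameter, $\mathsf{diam}^{F}(K_{n})$ exists because $K_{n}$ is $F$-totally bounded; this is the remark following Proposition \ref{jan23p1}. If $x,y\in K_{n}$, then both lie in $\overline{B}^{F}(x_{n},8r)$, so the triangle inequality for the seminorms gives
\[
\sum_{i\in F}p_{i}(x-y)\leq\sum_{i\in F}p_{i}(x-x_{n})+\sum_{i\in F}p_{i}(x_{n}-y)\leq16r=\varepsilon.
\]
Taking the supremum gives $\mathsf{diam}^{F}(K_{n})\leq\varepsilon$.

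I do not expect a real obstacle, since all the constructive content sits in Proposition \ref{p11}. That proposition needs the full total boundedness of $S$, which we have, and it delivers only $F$-total boundedness, which is exactly what the corollary claims. The only point to check is that each centre $x_{n}$ lies in $S$, as Proposition \ref{p11} requires; this holds because an $(F,r)$-approximation to $S$ is by definition a subset of $S$.
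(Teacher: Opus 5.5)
Your proposal is correct and matches the paper's proof: you take an $(F,\varepsilon/16)$-approximation, apply Proposition \ref{p11} at each centre (your $8r$ is the paper's $\varepsilon/2$), and bound the diameter by the triangle inequality. You also spell out two points the paper leaves implicit, namely that $\bigcup_{n}K_{n}\subset S$ and that each centre $x_{n}$ lies in $S$.
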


\begin{proof}
Given\textbf{\emph{\ }}$\varepsilon>0$, construct an $\left(  F,\varepsilon
/16\right)  $-approximation $\left\{  x_{1},\ldots,x_{N}\right\}  $ to $S$. By
Proposition \ref{p11}, for each $n\in\left\{  1,\ldots,N\right\}  $ there
exists a closed, $F$-totally bounded set $K_{n}$ such that $S\cap B^{F}%
(x_{n},\varepsilon/16)\subset K_{n}\subset S\cap\overline{B}^{F}%
(x_{n},\varepsilon/2)$. Hence
\[
S\subset\bigcup_{n=1}^{N}\left(  S\cap B^{F}\left(  x_{n},\tfrac{\varepsilon
}{16}\right)  \right)  \subset%
{\textstyle\bigcup_{n=1}^{N}}
K_{n}.
\]
Also, for all $x,y\in K_{n}$ we have%
\[
\sum_{n=1}^{N}p_{n}(x-y)\leq\sum_{n=1}^{N}p_{n}(x-x_{n})+\sum_{n=1}^{N}%
p_{n}(y-x_{n})\leq\tfrac{\varepsilon}{2}+\tfrac{\varepsilon}{2}=\varepsilon
\text{,}%
\]
so $\mathsf{diam}^{F}(K_{n})\leq\varepsilon$.
\end{proof}

\begin{theorem}
\label{t11}Let $S$ be a totally bounded subset of the locally convex space
$\left(  X,\left(  p_{i}\right)  _{i\in I}\right)  $, $f$ a uniformly
continuous mapping of $S$ into $\mathbb{R}$, and $F$ a finitely enumerable
subset of $I$. Then for all but countably many $r\in\mathbb{R}$ the set$.$%
\[
S(f,r)\equiv\left\{  x\in S:f(x)\leq r\right\}
\]
is either $F$-totally bounded or empty.
\end{theorem}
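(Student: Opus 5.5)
We follow the metric-space proof in \cite[pages 42--44]{BVtech}, working throughout with the pseudometric $d_F(x,y)\equiv\sum_{i\in F}p_i(x-y)$ on $S$. The idea is to cover $S$, at each scale $2^{-n}$, by finitely many $F$-totally bounded pieces of small $d_F$-diameter. We then attach to each piece the real numbers $\inf f$ and $\sup f$ over that piece; these exist by Corollary \ref{jan20c2}, after checking that $f$ restricted to an $F$-totally bounded piece still behaves well. All these numbers together form a countable set $D$ of exceptional values. For $r\notin D$ we show that $S(f,r)$ is located relative to $d_F$ in a suitable $F$-totally bounded set, and then argue as in Proposition \ref{jan23p1}.

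\textbf{Step 1 (covers).} Since $f$ is uniformly continuous, for each $n$ choose a finitely enumerable $F_n\supset F$ and $\delta_n\in(0,2^{-n})$ such that $d_{F_n}(x,y)<\delta_n$ implies $|f(x)-f(y)|<2^{-n}$. Apply Corollary \ref{c11} with $F_n$ and $\delta_n$. This writes $S=\bigcup_{k\le N_n}K_{n,k}$, where each $K_{n,k}$ is $F_n$-totally bounded (hence $F$-totally bounded, because $d_F\le d_{F_n}$) and has $\mathsf{diam}^{F_n}(K_{n,k})\le\delta_n$. Uniform continuity of $f$ relative to $d_{F_n}$ together with $F_n$-total boundedness gives the existence of
\[
a_{n,k}=\inf_{K_{n,k}}f,\qquad b_{n,k}=\sup_{K_{n,k}}f,
\]
via Proposition \ref{jan20p2} adapted to $F_n$, and $b_{n,k}-a_{n,k}\le 2^{-n}$. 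Let $D$ be the set of all the numbers $a_{n,k}$ and $b_{n,k}$.

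\textbf{Step 2 (dichotomy for $r\notin D$).} Fix $r$ with $r\neq d$ for every $d\in D$, in the constructive sense of apartness. For each $n$ and $k$ we then have either $r<a_{n,k}$ or $r>a_{n,k}$. In the first case $K_{n,k}\cap S(f,r)=\emptyset$. In the second case $K_{n,k}\cap S(f,r)$ is inhabited, and moreover $f\le a_{n,k}+2^{-n}$ on $K_{n,k}$. Replacing $r$ by $r+2^{-n}$ and using the corresponding alternative for $b_{n,k}$ (either $r>b_{n,k}$, so $K_{n,k}\subset S(f,r)$, or $r<b_{n,k}$), we decide which pieces meet $S(f,r)$. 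If no piece at level $1$ meets $S(f,r)$, then $S(f,r)$ is empty. Otherwise, at each level $n$, pick points $x_{n,k}\in K_{n,k}\cap S(f,r)$ for the pieces with $a_{n,k}<r$. These pieces cover $S(f,r)$, so the chosen points form a finitely enumerable $(F,\delta_n)$-approximation to $S(f,r)$, because $\mathsf{diam}^F(K_{n,k})\le\delta_n$. This already yields $F$-total boundedness.

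\textbf{Main obstacle.} Everything hinges on Step 1's claim that $\inf$ and $\sup$ of $f$ exist on each $K_{n,k}$. The set $K_{n,k}$ is only $F_n$-totally bounded, not totally bounded, so Corollary \ref{jan20c2} does not apply directly. The point to check is that $f(K_{n,k})$ is totally bounded in $\mathbb{R}$. Here I would apply the argument of Proposition \ref{jan20p2} with the single index set $F_n$. That argument works because $\delta_n$ was chosen from uniform continuity relative to $F_n$, and we may shrink $\delta$ while enlarging $F_n$ by taking unions of finitely enumerable sets. Getting this ordering of choices right is the delicate part: the index set $F_n$ must be fixed before Corollary \ref{c11} is applied, and it must serve every $\varepsilon\ge 2^{-n}$, with finer $\varepsilon$ handled at later levels. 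Since the diameter of a piece at level $n$ only controls oscillation of order $2^{-n}$, I would, if necessary, define the $\inf$ over $K_{n,k}$ using the nested refinements $K_{m,j}\cap K_{n,k}$ for $m>n$. Once this is in place, the countability of $D$ and the decision step are routine.
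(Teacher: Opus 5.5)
Your route is the paper's: cover $S$ at each scale by finitely many pieces of small $F$-diameter via Corollary \ref{c11}, use the infima of $f$ over the pieces as the countable exceptional set, and for $r$ apart from all of them decide piece by piece. There is a genuine gap, at the point you yourself call the main obstacle. Nothing in the proposal establishes that $a_{n,k}=\inf_{K_{n,k}}f$ exists. The piece $K_{n,k}$ is only $F_n$-totally bounded. The modulus of uniform continuity of $f$ at scales below $2^{-n}$ involves index sets not contained in $F_n$, so $f(K_{n,k})$ need not be totally bounded in $\mathbb{R}$, and the infimum can fail to exist.

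For a concrete failure, take $\mathbb{R}^{2}$ with $p_1(x,y)=|x|$, $p_2(x,y)=|y|$, $F=\{1\}$, and a binary sequence $(a_n)$. The set $\{(0,0)\}\cup\{(2^{-n},a_n):n\geq1\}$ is an $F$-totally bounded subset of the totally bounded set $[0,1]^2$. Yet the infimum of $(x,y)\rightsquigarrow -y$ over it would decide whether $a_n=1$ for some $n$.

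Your proposed remedy does not help. The sets $K_{m,j}\cap K_{n,k}$ need not be $F_m$-totally bounded. Nor can you decide which of them are inhabited, and that is the very locatedness question you are trying to settle. The paper's own proof passes over the same point: it asserts that $c_{kj}=\inf_{S_{kj}}f$ exists ``by Corollary \ref{jan20c2}''. That corollary needs $S_{kj}$ to be totally bounded, whereas Corollary \ref{c11} only makes it $F$-totally bounded. So you have found a real weak spot rather than missed an idea the paper supplies.

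The gap closes if you fix all the index sets before building the covers.
\begin{itemize}
\item By uniform continuity, choose finitely enumerable $F\subset G_1\subset G_2\subset\cdots$ and $\delta_m\in(0,1]$ such that $d_{G_m}(x,y)<\delta_m$ implies $|f(x)-f(y)|<2^{-m}$.
\item Put $\rho(x,y)\equiv\sum_{m\geq1}2^{-m}\min\{1,d_{G_m}(x,y)\}$.
\item Then $(S,\rho)$ is a totally bounded pseudometric space: approximate with respect to $G_N$ for large $N$.
\item Also $f$ is $\rho$-uniformly continuous, since $\rho(x,y)<2^{-m}\delta_m$ forces $d_{G_m}(x,y)<\delta_m$.
\item Finally, $\rho(x,y)<\varepsilon/2$ with $\varepsilon\leq1$ forces $d_F(x,y)\leq d_{G_1}(x,y)<\varepsilon$.
\end{itemize}
Now run Proposition \ref{p11} and Corollary \ref{c11} with $\rho$ in place of $d_F$, taking closures inside $S$ so that $f$ is defined on the pieces. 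This produces $\rho$-totally bounded pieces of small $\rho$-diameter. On each of them $f$ has totally bounded range, so the infima exist.

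Your Step 2 then goes through using only the $a_{n,k}$; the $b_{n,k}$ and the shift to $r+2^{-n}$ are unnecessary. It shows that for $r$ outside the countable set of infima, $S(f,r)$ is empty or $\rho$-totally bounded, hence $F$-totally bounded. Equivalently, you can apply the metric-space version of the theorem directly to $(S,\rho)$.
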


\begin{proof}
By Corollary \ref{c11}, for each positive integer $k$ there exist a positive
integer $n_{k}$ and $F$-totally bounded sets $S_{kj}\ \left(  1\leq j\leq
n_{k}\right)  $, with each $\mathsf{diam}^{F}(S_{kj})<1/k$, whose union is
$S$. Let $\left(  r_{n}\right)  _{n\geq1}$ be an enumeration of the real
numbers%
\[
c_{kj}\equiv\inf\left\{  f(x):x\in S_{kj}\right\}  \ \ (1\leq k;\,1\leq j\leq
n_{k}),
\]
each of which exists by Corollary \ref{jan20c2}. Consider any $r\in\mathbb{R}$
such that $r\neq r_{n}$ for each $n$. For each positive integer $k$, since
$r\ $is distinct from each $c_{kj}$, either $c_{kj}>r$ for each $j\leq n_{k}$,
in which case $S(f,r)=%
{\textstyle\bigcup_{j=1}^{n_{k}}}
S_{kj}(f,r)=\varnothing$, or else%
\[
S_{k}\equiv\left\{  j:1\leq j\leq n_{k},\,c_{kj}<r\right\}
\]
is inhabited and therefore finitely enumerable. In the latter case, for each
$j\in S_{k}$ choose $x_{kj}\in S_{kj}$. Given $x\in S(f,r)$, choose $j\leq
n_{k}$ such that $x\in S_{kj}$. Then $c_{kj}\leq f(x)\leq r$, so $c_{kj}<r$
(since $r\neq c_{kj}$) and therefore $j\in S_{k}$. Hence%
\[
\sum_{i\in F}p_{i}(x-x_{kj})\leq\mathsf{diam}^{F}(S_{kj})<\frac{1}{k}.
\]
Thus%
\[
\left\{  x_{kj}:1\leq j\leq n_{k}\text{, }c_{kj}<r\right\}
\]
is a finitely enumerable $(F,1/k)$-approximation to $S$. Since $k$ is
arbitrary, $S$ is $F$-totally bounded.
\end{proof}

\begin{corollary}
\label{c22}Let $\left(  X,\left(  p_{n}\right)  _{n\geq1}\right)  $ be a
locally convex space with a countable family $\left(  p_{n}\right)  _{n\geq1}$
of defining seminorms, $S$ a totally bounded subset of $X$, and $f$ a
uniformly continuous mapping of $S$ into $\mathbb{R}$. Then for all but
countably many $r\in\mathbb{R}$ the set $S(f,r)$ is either totally bounded or empty.
\end{corollary}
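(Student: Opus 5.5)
The plan is to apply Theorem \ref{t11} once for each finite initial segment of the index set and then combine the countably many exceptional sets. For each positive integer $N$ let $F_{N}\equiv\{1,\ldots,N\}$; this is an inhabited, finitely enumerable subset of the index set $\mathbb{N}^{+}$. By Theorem \ref{t11} there is a sequence $(r_{N,n})_{n\geq1}$ of reals such that whenever $r\neq r_{N,n}$ for every $n$, the set $S(f,r)$ is either $F_{N}$-totally bounded or empty. I would enumerate the doubly indexed family $(r_{N,n})_{N,n\geq1}$ as a single sequence $(t_{m})_{m\geq1}$. Then I would fix any $r\in\mathbb{R}$ with $r\neq t_{m}$ for all $m$, and show that $S(f,r)$ is either totally bounded or empty.

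The one delicate point is the constructive disjunction ``totally bounded or empty''. Theorem \ref{t11} gives, for each $N$ separately, a decision between ``$F_{N}$-totally bounded'' and ``empty'', and these decisions must cohere. To avoid reconciling them, I would use only $N=1$ to make the decision. If $S(f,1)$ falls in the empty case, then $S(f,r)$ is empty and we are done. Otherwise $S(f,r)$ is $F_{1}$-totally bounded, so in particular it is inhabited. Then for every $N$ the alternative ``empty'' is ruled out, and the dichotomy at $N$ must yield $F_{N}$-total boundedness. Looking at the proof of Theorem \ref{t11}, the dichotomy is really decided by whether some $c_{kj}<r$, so the empty case there is genuine emptiness. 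Hence once $S(f,r)$ is inhabited, the $F_{N}$ branch is forced for every $N$.

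It remains to pass from ``$F_{N}$-totally bounded for every $N$'' to ``$F$-totally bounded for every finitely enumerable $F\subset\mathbb{N}^{+}$''. Given such an $F$, I would take $N$ to be the maximum of its finitely many listed elements, so that every $i\in F$ satisfies $i\leq N$. Since each seminorm is nonnegative and $F$ is a subset of $F_{N}$, I need $\sum_{i\in F}p_{i}\leq\sum_{i\in F_{N}}p_{i}$. A finitely enumerable $F$ may list an index more than once, and the sum over $F$ should be read as a sum over its distinct members. Equality on $\mathbb{N}$ is decidable, so those distinct members can be extracted, and the inequality then holds. Consequently any $(F_{N},\varepsilon)$-approximation to $S(f,r)$ is also an $(F,\varepsilon)$-approximation, and $S(f,r)$ is totally bounded.

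I expect the main obstacle to be the coherence of the dichotomies across different $N$, which I handle by deciding once at $N=1$ and using inhabitedness.
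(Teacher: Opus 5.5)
Your proposal is correct and follows the paper's proof: apply Theorem \ref{t11} to each $\{1,\ldots,N\}$, merge the countably many exceptional sequences into one, and use $\sum_{i\in F}p_{i}\leq\sum_{n=1}^{N}p_{n}$ for $F\subset\{1,\ldots,N\}$. Your extra care tidies up two points the paper's proof glosses over. The first is the ``or empty'' alternative, which you settle at $N=1$ and then discard for every $N$ by inhabitedness; plain disjunctive syllogism suffices here, so there is no need to look inside the proof of Theorem \ref{t11}. The second is repeated indices in a finitely enumerable $F$.
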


\begin{proof}
By Theorem \ref{t11}, for each positive integer $N$ there exists a countable
family $\left(  c_{N,k}\right)  _{k\geq1}$ of real numbers such that if
$r\in\mathbb{R}$ and $r\neq c_{N,k}$ for each $k$, then $S(f,r)$ is $\left\{
1,\ldots,N\right\}  $-totally bounded. Let $\left(  c_{k}\right)  _{k\geq1}$
be an enumeration of all the numbers $c_{N,k}\ (N,k\geq1)$, and let
$r\in\mathbb{R}$ satisfy $r\neq c_{k}$ for each $k\geq1$. If $F$ is any
finitely enumerable set of positive integers, then there exists $N$ such that
$F\subset\left\{  1,\ldots,N\right\}  $. Given $\varepsilon>0$, choose a
finitely enumerable $(\left\{  1,\ldots,N\right\}  ,\varepsilon)$%
-approximation $Y$ to $S$. For each $x\in S$ there exists $y\in Y$ such that
$\sum_{n\in F}p_{i}(x-y)\leq\sum_{n=1}^{N}p_{i}(x-y)<\varepsilon$. Hence $Y$
is an $(F,\varepsilon)$-approximation to $S$. Since $F$ and $\varepsilon\ $are
arbitrary, it follows that $S$ is totally bounded.
\end{proof}

%

\medskip

The key application of these results is the following.\footnote{%
\normalfont\sf
This proposition improves the statement and proof of \cite[5.4.9]{BVtech}.}

\begin{proposition}
\label{p22}Let $\left(  X,\left(  p_{i}\right)  _{i\in I}\right)  $ be a
locally convex space; $K$ a balanced, convex, totally bounded subset of $X$;
and $u$ a nonzero continuous linear functional on $X$. Then $K\cap\ker u$ is
totally bounded.
\end{proposition}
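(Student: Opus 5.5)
The plan is to reduce to Theorem \ref{t11} applied to $f=|u|$ on $K$, and then to push approximants onto $\ker u$ using the balanced, convex structure of $K$. Fix a finitely enumerable $F\subset I$ and $\varepsilon>0$; we must produce a finitely enumerable $(F,\varepsilon)$-approximation to $K\cap\ker u$ lying inside $K\cap\ker u$.

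\emph{Step 1 (setup).} By Propositions \ref{jan20p1} and \ref{jan22p2}, $|u|$ is uniformly continuous on $K$. So Corollary \ref{jan20c2} gives the existence of $s\equiv\sup_{x\in K}|u(x)|$. By Proposition \ref{feb02p1} there is $M>0$ with $\sum_{i\in F}p_i(x)\leq M$ for all $x\in K$.

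\emph{Step 2 (sublevel sets).} By Theorem \ref{t11}, for all but countably many $r>0$ the set
\[
A_r\equiv\{x\in K:|u(x)|\leq r\}
\]
is $F$-totally bounded. It is never empty, since $0\in K$ because $K$ is balanced. Moreover $K\cap\ker u\subset A_r$. Hence we may choose such an $r$ as small as we like, and take a finitely enumerable $(F,\varepsilon/2)$-approximation $\{x_1,\dots,x_n\}\subset A_r$ to $A_r$.

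\emph{Step 3 (projection into the kernel inside $K$).} Suppose we have $z\in K$ with $|u(z)|>c>0$. For $x\in A_r$ put $\lambda=u(x)/u(z)$, so $|\lambda|\leq r/c$. If $\lambda\neq0$, set
\[
y=\frac{1}{1+|\lambda|}\,x+\frac{|\lambda|}{1+|\lambda|}\Bigl(-\frac{\lambda}{|\lambda|}z\Bigr)=\frac{x-\lambda z}{1+|\lambda|}.
\]
This is a convex combination of $x$ and a point of $K$ (by balancedness), so $y\in K$. Also $u(y)=0$. Finally
\[
\sum_{i\in F}p_i(x-y)=\frac{1}{1+|\lambda|}\sum_{i\in F}p_i(|\lambda|x+\lambda z)\leq 2M|\lambda|\leq 2Mr/c .
\]
The formula $y=(x-\lambda z)/(1+|\lambda|)$ also works when $\lambda=0$, so no case split on $\lambda$ is needed. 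Choosing $r$ with $2Mr/c<\varepsilon/2$, the points $y_1,\dots,y_n$ obtained from $x_1,\dots,x_n$ form the required approximation. Indeed, each $w\in K\cap\ker u\subset A_r$ is within $\varepsilon/2$ of some $x_k$, and $x_k$ is within $\varepsilon/2$ of $y_k$.

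\emph{Main obstacle.} The difficulty is obtaining $z$ with $|u(z)|$ bounded away from $0$, since we cannot decide constructively whether $s>0$ or $s=0$. I would first try the dichotomy $s>\eta$ or $s<2\eta$ for a suitable small $\eta$. When $s>\eta$, take $c=\eta$ and argue as above. When $s<2\eta$, every $x\in K$ has $|u(x)|<2\eta$. I would then try to show directly that each approximant $x_k$ can be moved into $K\cap\ker u$ at $F$-cost $O(\eta)$, which is plausible only if $\eta$ is chosen in terms of $\varepsilon$ and $M$. If that fails, the fallback is to treat each $x_k$ separately. For each $k$, either $|u(x_k)|$ is tiny relative to $s$, or $|u(x_k)|>0$, in which case $s>0$ and a $z$ with $|u(z)|$ close to $s$ exists. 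The estimate in Step 3 then needs only the ratio $|u(x_k)|/|u(z)|$ to be small. This ratio is what the choice of $r$ relative to the available $c$ must control, so that is where I expect the real work to lie.
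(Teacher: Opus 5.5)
Your Steps 1--3 are essentially the paper's argument: take $C=\sup_{x\in K}|u(x)|$ and a bound $b$ for $\sum_{i\in F}p_i$ on $K$, use Theorem~\ref{t11} to make a small sublevel set $S_t=\{y\in K:|u(y)|\leq t\}$ $F$-totally bounded, and push each approximant into $\ker u$ by a convex combination with a balanced multiple of a fixed point of $K$. But your proposal stops exactly at the point you flag, so it is not yet a proof, and the fallbacks you sketch cannot close the gap. The paper gets past this point only by the bare assertion ``Clearly, $C$ is positive'', after which it picks $y_0\in K$ with $u(y_0)=C/2$ (your $z$). That assertion does not follow from the hypotheses: $K=\{0\}$ gives $C=0$. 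In fact the proposition as literally stated implies the weak limited principle of omniscience (WLPO). To see this, let $X=\mathbb{R}^2$ with the Euclidean norm as its only defining seminorm, let $u(a,b)=b$, and, for an arbitrary real $\alpha$, let $K=\{t(1,\alpha):|t|\leq1\}$; this $K$ is balanced, convex and totally bounded. Suppose $\{t_1(1,\alpha),\dots,t_n(1,\alpha)\}\subset K\cap\ker u$ is an $(F,\tfrac12)$-approximation to $K\cap\ker u$, where $F=I$ is the one-point index set. Then $t_j\alpha=0$ for each $j$. Put $m=\max_j|t_j|$. Either $m>\tfrac14$, so some $t_j\neq0$ and hence $\alpha=0$; or $m<\tfrac12$, in which case $\alpha=0$ is impossible, because it would put $(1,0)$ in $K\cap\ker u$ within $\tfrac12$ of some $(t_j,0)$, forcing $t_j>\tfrac12$. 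Thus $\alpha=0\vee\neg(\alpha=0)$ for every real $\alpha$, which yields WLPO.

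So what is missing is not a cleverer case analysis but an extra hypothesis: $\sup_{x\in K}|u(x)|>0$, equivalently $u(x)\neq0$ for some $x\in K$. The paper uses this tacitly. It holds whenever $\delta v\in K$ for some $\delta>0$ and some $v$ with $u(v)\neq0$; for example, the unit ball $X_1^{\ast}$ contains a positive multiple of every element of $X^{\ast}$. With this hypothesis you can take $z\in K$ with $|u(z)|>c\equiv\frac12\sup_{x\in K}|u(x)|$, and your argument is complete after one repair in Step~3. You verify $y\in K$ separately for $\lambda\neq0$ and for $\lambda=0$, but constructively you cannot decide which case holds, so your remark that no case split is needed is unjustified. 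Replace $|\lambda|$ by the fixed bound $\mu\equiv r/c\geq|\lambda|$. Then $y\equiv(x-\lambda z)/(1+\mu)$ is the convex combination of $x$ and $-(\lambda/\mu)z\in K$ with weights $1/(1+\mu)$ and $\mu/(1+\mu)$. Also $u(y)=0$ and $\sum_{i\in F}p_i(x-y)\leq2\mu M=2Mr/c$, so your choice of $r$ still works. This is precisely the paper's $y_k=\frac{C}{C+2t}s_k-\frac{2}{C+2t}u(s_k)y_0$, with $\mu=2t/C$.
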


\begin{proof}
Since, by Proposition \ref{jan20p1}, $u$ is uniformly continuous on the
totally bounded set $K$, we see that%
\[
C\equiv\sup\left\{  \left\vert u(x)\right\vert :x\in K\right\}
\]
exists by Corollary \ref{jan20c2}. Clearly, $C$ is positive. Choose $y_{1}\in
K$ such that $u(y_{1})>C/2$. Then%
\[
y_{0}\equiv\frac{C}{2u(y_{1})}y_{1}%
\]
belongs to the balanced set $K$, and $u(y_{0})=C/2$. Let $\varepsilon>0$ and
let $F$ be a finitely enumerable subset of $I$. By Propositions \ref{jan22p2}
and \ref{jan20p2}, each $p_{i}(K)$ is a totally bounded subset of $\mathbb{R}%
$, so by Proposition \ref{feb02p1}, there exists $b>0$ such that $\sum_{i\in
F}p_{i}(x)\leq b$ for each $x\in K$. By Theorem \ref{t11}, there exists $t$
such that%
\[
0<t<\frac{C\varepsilon}{C+4b}%
\]
and the set%
\[
S_{t}\equiv\left\{  y\in K:\left\vert u(y)\right\vert \leq t\right\}
\]
is $F$-totally bounded. Pick an $\left(  F,t\right)  $-approximation $\left\{
s_{1},\ldots,s_{n}\right\}  $ to $S_{t}$, and set%
\[
y_{k}\equiv\frac{C}{C+2t}s_{k}-\frac{2}{C+2t}u(s_{k})y_{0}\ \ \ \left(  1\leq
k\leq n\right)  .
\]
Then $y_{k}\in\ker u$. Since $\left\vert u(s_{k})\right\vert \leq t$ and $K$
is balanced,%
\[
\frac{-u(s_{k})}{t}y_{0}\in K.
\]
Thus%
\[
y_{k}=\frac{C}{C+2t}s_{k}+\left(  1-\frac{C}{C+2t}\right)  \frac{-u(s_{k})}%
{t}y_{0}\in K
\]
and%
\[
s_{k}-y_{k}=\left(  1-\frac{C}{C+2t}\right)  \left(  s_{k}+\frac{u(s_{k})}%
{t}y_{0}\right)  =\frac{2t}{C+2t}\left(  s_{k}+\frac{u(s_{k})}{t}y_{0}\right)
\]
Also,%
\begin{align*}
\sum_{i\in F}p_{i}(s_{k}-y_{k}) &  =\frac{2t}{C+2t}\sum_{i\in F}p_{i}\left(
s_{k}+\frac{u(s_{k})}{t}y_{0}\right)  \\
&  \leq\frac{2t}{C}\sum_{i\in F}\left(  p_{i}(s_{k})+\frac{\left\vert
u(s_{k})\right\vert }{t}p_{i}(y_{0})\right)  \\
&  \leq\frac{2t}{C}\left(  \sum_{i\in F}p_{i}(s_{k})+\sum_{i\in F}p_{i}%
(y_{0})\right)  \leq\frac{2t}{C}(b+b)=\frac{4tb}{C}.
\end{align*}
If $y\in K\cap\ker u\subset S_{t}$, then there exists $k$ such that
$\sum_{i\in F}p_{i}(y-s_{k})<t$ and therefore%
\[
\sum_{i\in F}p_{i}(y-y_{k})\leq\sum_{i\in F}p_{i}(y-s_{k})+\sum_{i\in F}%
p_{i}(s_{k}-y_{k})<t+\frac{4tb}{C}=t\frac{C+4b}{C}<\varepsilon.
\]
Thus $\left\{  y_{1},\ldots,y_{n}\right\}  $ is a finitely enumerable $\left(
F,\varepsilon\right)  $-approximation to $K\cap\ker u$. Since $F,\varepsilon$
are arbitrary, it follows that $K$ is totally bounded.
\end{proof}

%

\medskip

\section{Applications}

We conclude by dealing, but without proofs, with some theorems that depend on
the work in our preceding sections. But first we have more definitions.

Let $\left(  X,\left(  p_{i}\right)  _{i\in I}\right)  $ be a \emph{separable}
locally convex space. We say that a sequence $\left(  x_{n}\right)  _{n\geq1}$
in $X$

\begin{itemize}
\item is a \textbf{Cauchy sequence} if for each $\varepsilon>0$ and each
finitely enumerable $F\subset I$ there exists $N$ such that $\sum_{i\in
F}p_{i}(x_{m}-x_{n})<\varepsilon$;

\item \textbf{converges} to the (perforce unique) \textbf{limit} $x_{\infty
}\in X$ if for each $\varepsilon>0$ and each finitely enumerable $F\subset I
$, there exists $N$ such that $\sum_{i\in F}p_{i}(x_{n}-x_{\infty
})<\varepsilon$.
\end{itemize}

%

\noindent
We say that the separable locally convex space $X$ is \textbf{complete} if
every Cauchy sequence in $X$ converges to a limit in $X$.%

\medskip
If $X$ and $Y$ are normed linear spaces, then the (norm-) \textbf{unit ball
}of $\mathcal{B}(X,Y)$ is the set%
\[
\mathcal{B}_{1}(X,Y)\equiv\left\{  T\in\mathcal{B}(X,Y):\left\Vert
Tx\right\Vert \leq\left\Vert x\right\Vert \text{ for all }x\in X\right\}  .
\]
The unit ball of the dual $X^{\ast}$ of $X$ is usually denoted by $X_{1}%
^{\ast}$, and that of $\mathcal{B}(H)$, where $H$ is a Hilbert space, by
$\mathcal{B}_{1}(H)$.

\begin{theorem}
\label{jan30t1}If $X$ is a separable normed linear space, then the
\textbf{unit ball,}%
\[
X_{1}^{\ast}\equiv\left\{  u\in X^{\ast}:\left\vert u(x)\right\vert
\leq\left\Vert x\right\Vert \text{ for all }x\in X\right\}  ,
\]
of the dual space $X_{1}^{\ast}\ $is complete and totally bounded relative to
the weak$^{\ast}$ topology (the \textbf{Banach-Alaoglu theorem}
\emph{\cite[5.4.7]{BVtech}).}
\end{theorem}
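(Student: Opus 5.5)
The plan is to work with the weak$^{\ast}$ seminorms $\left\Vert u\right\Vert _{x}=\left\vert u(x)\right\vert $, indexed by $x$ in the closed unit ball of $X$. Fix a dense sequence $(a_{n})_{n\geq1}$ in that ball; it exists because $X$ is separable. The first observation is that for $u,v\in X_{1}^{\ast}$ and any $x$ with $\left\Vert x\right\Vert \leq1$, if $\left\Vert x-a_{n}\right\Vert <\delta$ then
\[
\left\vert (u-v)(x)\right\vert \leq\left\vert (u-v)(a_{n})\right\vert +2\delta .
\]
So on $X_{1}^{\ast}$, control of the values at finitely many of the $a_{n}$ gives control of any given finitely enumerable family of seminorms. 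In effect, the weak$^{\ast}$ uniformity on $X_{1}^{\ast}$ is generated by the countable family $\left\vert u(a_{n})\right\vert $.

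\textbf{Total boundedness.} Let $F=\{x_{1},\ldots,x_{m}\}$ be finitely enumerable and let $\varepsilon>0$. I choose indices $n_{1},\ldots,n_{m}$ with $\left\Vert x_{k}-a_{n_{k}}\right\Vert <\varepsilon/(8m)$. The map $u\rightsquigarrow(u(a_{n_{1}}),\ldots,u(a_{n_{m}}))$ sends $X_{1}^{\ast}$ into the compact polydisc $\overline{B}(0,1)^{m}$, and I take a finite $\varepsilon/(4m)$-net of that polydisc. The difficulty is that the image of $X_{1}^{\ast}$ need not be located, so I cannot simply keep those net points that lie within $\varepsilon/(4m)$ of the image.

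This is the main obstacle. I would handle it by induction on $m$ using the Hahn--Banach theorem: from a functional with prescribed approximate values one extends to a normed functional in $X_{1}^{\ast}$ after a slight shrinking. Concretely, the constructive Hahn--Banach theorem \cite{BVtech} yields, for each rational point $c$ of the net that is approximately attainable, some $u_{c}\in X_{1}^{\ast}$ with $u_{c}(a_{n_{k}})\approx c_{k}$. The attainability test is a strict/non-strict dichotomy on the norm of the linear functional defined on $\mathrm{span}\{a_{n_{1}},\ldots,a_{n_{m}}\}$ by $a_{n_{k}}\mapsto c_{k}$. That finite-dimensional norm is computable, since finite-dimensional subspaces are located and functionals on them are normable. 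In the first case of the dichotomy the functional extends with norm $\leq1+\eta$, and I rescale it by $1/(1+\eta)$. The $u_{c}$ obtained in this first case form the required finitely enumerable $(F,\varepsilon)$-approximation. To see this, take any $u\in X_{1}^{\ast}$ and pick $c$ close to its image point; then $c$ falls in the attainable case, and the estimate in the first paragraph finishes.

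\textbf{Completeness.} Let $(u_{n})$ be weak$^{\ast}$ Cauchy in $X_{1}^{\ast}$. For each $x$ with $\left\Vert x\right\Vert \leq1$, the sequence $(u_{n}(x))$ is Cauchy in the ground field, since $\{x\}$ is itself a finitely enumerable index set. I define $u(x)=\lim u_{n}(x)$ and extend by homogeneity to all of $X$. Linearity and $\left\vert u(x)\right\vert \leq\left\Vert x\right\Vert $ pass to the limit, so $u\in X_{1}^{\ast}$, and $u_{n}\rightarrow u$ in every finite sum of seminorms. This part is routine.
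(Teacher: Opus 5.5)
The paper states this theorem without proof (Section 4 defers to \cite[5.4.7]{BVtech}), so your argument has to stand on its own. Your reduction via $|(u-v)(x)|\leq|(u-v)(a_{n})|+2\delta$ and your completeness argument are fine. The total boundedness argument, however, has a genuine gap at exactly the step you flag.

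The rule $a_{n_{k}}\rightsquigarrow c_{k}$ defines a linear functional on $\mathrm{span}\{a_{n_{1}},\ldots,a_{n_{m}}\}$ only if the $a_{n_{k}}$ are linearly independent. Nothing in your construction ensures this: two of the $x_{k}$ may coincide (and you may then pick $a_{n_{1}}=a_{n_{2}}$), $X$ may have dimension less than $m$, or dependence may simply be undecidable. Constructively you cannot extract a basis from a finite spanning list. So this span is not a finite-dimensional space in the sense that makes it located with all functionals normable. Even if you restrict to the $c$ that respect every linear relation, the norm need not exist. For $m=1$ and $c=\Vert a_{n_{1}}\Vert$, the functional $\lambda a_{n_{1}}\rightsquigarrow\lambda c$ has norm $1$ or $0$ according as $a_{n_{1}}\neq0$ or $a_{n_{1}}=0$, so its existence would decide which. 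Hence neither your dichotomy nor the constructive Hahn--Banach theorem (which needs a normable functional) is available.

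There is also a quantitative defect. To see that a net point $c$ near $(u(a_{n_{k}}))_{k}$ lands in the attainable case, you must bound the norm of the difference of the two functionals on the span by the mesh. That requires some $\gamma>0$ with $\Vert\sum_{k}\lambda_{k}a_{n_{k}}\Vert\geq\gamma\sum_{k}|\lambda_{k}|$, and a mesh $\varepsilon/(4m)$ fixed in advance does not supply it.

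The missing idea is to pass first to a genuinely finite-dimensional subspace.

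\emph{Building the subspace.} Fix $\delta\in(0,1]$ and run through $x_{1},\ldots,x_{m}$. For each, decide whether $\rho(x_{k},V)<\delta$ or $\rho(x_{k},V)>\delta/2$, where $V$ is the span of the vectors already adjoined, and adjoin $x_{k}$ in the second case. Each resulting $y_{1},\ldots,y_{M}$ lies at distance $>\delta/2$ from the span of its predecessors. This gives inductively an explicit $\gamma>0$ with $\Vert\sum_{j}\lambda_{j}y_{j}\Vert\geq\gamma\sum_{j}|\lambda_{j}|$, and hence the locatedness needed at each step. Moreover $\rho(x_{k},V)<\delta$ for all $k$.

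\emph{The net and the dichotomy.} Each scalar $M$-tuple $c$ now gives a normable functional $f_{c}:y_{j}\rightsquigarrow c_{j}$ on $V$, with $\Vert f_{c}-f_{c^{\prime}}\Vert\leq\gamma^{-1}\max_{j}|c_{j}-c_{j}^{\prime}|$. Take a finite net of the polydisc $\prod_{j}\{z:|z|\leq\Vert y_{j}\Vert\}$ with mesh small compared with $\gamma\eta$, and use the dichotomy $\Vert f_{c}\Vert<1+2\eta$ or $\Vert f_{c}\Vert>1+\eta$. In the first case, Hahn--Banach plus rescaling gives $u_{c}\in X_{1}^{\ast}$ with $u_{c}(y_{j})$ close to $c_{j}$; take $u_{c}=0$ when $\Vert f_{c}\Vert$ is small, since Hahn--Banach is for nonzero functionals. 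For $u\in X_{1}^{\ast}$, the restriction $u|_{V}$ has norm at most $1$, so a net point within the mesh of $(u(y_{j}))_{j}$ cannot fall in the second case.

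\emph{Transferring back to the $x_{k}$.} Write $v_{k}=\sum_{j}\lambda_{kj}y_{j}\in V$ with $\Vert x_{k}-v_{k}\Vert<\delta$. Then $\sum_{j}|\lambda_{kj}|\leq2/\gamma$, so $|(u-w)(x_{k})|\leq(2/\gamma)\max_{j}|(u-w)(y_{j})|+2\delta$ for $u,w\in X_{1}^{\ast}$.

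With this repair your strategy works. The dense sequence $(a_{n})$ becomes unnecessary, and separability enters only through the Hahn--Banach theorem.
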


\begin{theorem}
\label{jan30t2}Let $X$ be a separable Banach space, and $f$ a weak$^{\ast}%
~$continuous linear functional on $X^{\ast}$. Then there exists $x\in X$ such
that $f(u)=u(x)$ for each $u\in X^{\ast}$ \emph{\cite[5.4.14]{BVtech}}
\end{theorem}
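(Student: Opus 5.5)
Given a weak$^{\ast}$ continuous linear functional $f$ on $X^{\ast}$, I will use Proposition \ref{jan20p1}(d). The defining seminorms of the weak$^{\ast}$ topology are $u\rightsquigarrow\left\vert u(x)\right\vert$ with $\left\Vert x\right\Vert \leq1$. So there exist $C>0$ and finitely many vectors $x_{1},\ldots,x_{n}$ in the unit ball of $X$ such that%
\[
\left\vert f(u)\right\vert \leq C\sum_{k=1}^{n}\left\vert u(x_{k})\right\vert \ \ \ (u\in X^{\ast}).
\]
In particular $f$ vanishes on $\bigcap_{k}\ker\hat{x}_{k}$, where $\hat{x}_{k}(u)=u(x_{k})$. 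The problem is thus reduced to linear algebra in finite dimensions: I want to write $f$ as a linear combination $\sum\lambda_{k}\hat{x}_{k}$, and then $x=\sum\lambda_{k}x_{k}$ will do.

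Constructively, I cannot decide linear dependence among the $x_{k}$, so I will not proceed by picking a basis. Instead I will work with the finite-dimensional subspace $V$ spanned by $x_{1},\ldots,x_{n}$, perturbing the $x_{k}$ slightly if necessary so that they are linearly independent. The perturbation is harmless because the estimate above survives with $C$ enlarged. Finite-dimensional subspaces of a normed space are located. Hence, via Hahn--Banach for separable normed spaces applied to the located subspace, every linear functional $\phi$ on $V$ extends to some $u_{\phi}\in X^{\ast}$ with controlled norm. Define $g(\phi)=f(u_{\phi})$. This is well defined: two extensions differ by an element of $\bigcap_{k}\ker\hat{x}_{k}$, on which $f$ vanishes. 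It is also linear on the finite-dimensional dual $V^{\ast}$. Since $V^{\ast\ast}=V$ canonically, $g$ is evaluation at some $x\in V$, so $f(u)=g(u|_{V})=u(x)$ for every $u\in X^{\ast}$.

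The main obstacle is the independence step. Without it, the map $\phi\rightsquigarrow u_{\phi}$ need not be available, and I cannot identify $V^{\ast\ast}$ with $V$ by choosing a basis. There are two routes I would try, in this order. First, a limiting argument. Choose independent perturbations $x_{k}^{(m)}\to x_{k}$ and apply the finite-dimensional result to get $x^{(m)}$. Then use the bound $\left\vert f(u)\right\vert \leq C\sum\left\vert u(x_{k})\right\vert $ together with normability of functionals on finite-dimensional spaces to show that $(x^{(m)})$ is Cauchy. Completeness of the Banach space $X$ then supplies the limit $x$. Second, if that fails, I would use Proposition \ref{p22}. The set $\bigcap\ker\hat{x}_{k}\cap X_{1}^{\ast}$ is weak$^{\ast}$ totally bounded, by Theorem \ref{jan30t1} and repeated intersection with kernels; this makes the relevant kernels located (Proposition \ref{jan22p1}), and so Proposition \ref{p33} supplies the normability of $f$ needed to carry out the comparison.
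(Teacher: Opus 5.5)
Your reduction to $V^{\ast\ast}=V$ is correct when $x_{1},\ldots,x_{n}$ are linearly independent. However, the step you flag as the main obstacle is not resolved, and the fix you propose is false. Perturbing the $x_{k}$ does \emph{not} preserve an estimate of the form $\vert f(u)\vert\leq C^{\prime}\sum_{k}\vert u(x_{k}^{\prime})\vert$. From $\vert u(x_{k})\vert\leq\vert u(x_{k}^{\prime})\vert+\Vert u\Vert\,\Vert x_{k}-x_{k}^{\prime}\Vert$ you only get an extra term $C\Vert u\Vert\sum_{k}\Vert x_{k}-x_{k}^{\prime}\Vert$, and $\Vert u\Vert$ is not dominated by any weak$^{\ast}$ seminorm. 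Concretely, in $X=\mathbb{R}^{2}$ let $f(u)=u(e_{1})$, $x_{1}=e_{1}$ and $x_{1}^{\prime}=e_{1}+\delta e_{2}$. The functional with $u(e_{1})=\delta$, $u(e_{2})=-1$ annihilates $x_{1}^{\prime}$, yet $f(u)=\delta$. So $f$ need not vanish on $\bigcap_{k}\ker\hat{x}_{k}^{\prime}$, your $g$ is not well defined, and $f$ is in general not a combination of the $\hat{x}_{k}^{\prime}$. Independent perturbations need not even exist (take two vectors in $X=\mathbb{R}$), and since you cannot decide linear dependence you cannot perturb only ``if necessary''. Your first fallback inherits the defect: there is no finite-dimensional result to apply to the $x_{k}^{(m)}$, so the $x^{(m)}$ are never defined. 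Your second fallback is not yet an argument. Proposition \ref{p22} needs a nonzero functional, and you cannot know that $\hat{x}_{k}$ is nonzero, let alone nonzero on the previous intersection. Nor do you say how locatedness or $F$-normability of $f$ would produce a vector $x$. The paper gives no proof of its own; it defers to \cite[5.4.14]{BVtech}, whose argument uses Proposition \ref{p22} in $X^{\ast}$ (presumably with $K=X_{1}^{\ast}$, weak$^{\ast}$ totally bounded by Theorem \ref{jan30t1}) together with several technical lemmas.

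What is missing is an approximation scheme whose error is controlled uniformly. One way to keep your strategy is as follows. For each $\varepsilon>0$, select (using locatedness of spans of independent vectors) an independent subfamily of the $x_{k}$ such that every $x_{k}$ lies within $\varepsilon$ of its span $V$. Then $\vert f(w)\vert\leq Cn\varepsilon\Vert w\Vert$ for each $w\in X^{\ast}$ vanishing on $V$. Next you need a linear map $E:V^{\ast}\rightarrow X^{\ast}$ with $E(\varphi)|_{V}=\varphi$ and $\Vert E\Vert\leq K_{0}$ \emph{independent of} $\varepsilon$. For instance, take Hahn--Banach extensions of the coordinate functionals $\psi_{l}$ of an approximate Auerbach basis $z_{1},\ldots,z_{m}$ of $V$, obtained by approximately maximizing a determinant over the totally bounded unit ball of $V$; this gives $K_{0}\leq2n$. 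Then $x_{\varepsilon}\equiv\sum_{l}f(E\psi_{l})z_{l}$ satisfies $\vert f(u)-u(x_{\varepsilon})\vert=\vert f(u-E(u|_{V}))\vert\leq Cn\varepsilon(1+K_{0})\Vert u\Vert$. Norming by $X_{1}^{\ast}$ gives $\Vert x_{\varepsilon}-x_{\eta}\Vert\leq Cn(1+K_{0})(\varepsilon+\eta)$, and completeness of $X$ yields the required $x$. The naive dual basis of the selected $x_{k}$ will not do here: its Hahn--Banach extensions can have norm of order $1/\varepsilon$, which destroys the $O(\varepsilon)$ bound.
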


The proof of Theorem \ref{jan30t2} depends on Proposition \ref{p22} above, as
well as a number of technical lemmas on pages 134--137 of \cite{BVtech}%
.\footnote{%
\normalfont\sf
The statement of 5.4.14 on page 137 of \cite{BVtech} includes the hypothesis
that the linear functional is weak$^{\ast}$-uniformly continuous on
$X_{1}^{\ast}$, which, in view of our Proposition \ref{jan20p1}, is equivalent
to our hypothesis that it be weak$^{\ast}$ continuous.}

Another place where the results in Sections 1 and 2 are used is in the proof
of \cite[Theorem 10]{dsbfnl}:

\begin{theorem}
\label{jan30t3}Let $H$ be a nontrivial Hilbert space, and $u$ a nonzero
weak-operator continuous linear functional on $\mathcal{B}(H)$. Let $\delta$
be a positive number, $\xi_{1},\ldots,\xi_{N}$ linearly independent vectors in
$H$, and $\zeta_{1},\ldots,\zeta_{N}$ nonzero vectors in $H$, such that
$\left\vert u(T)\right\vert \leq\delta\sum_{n=1}^{N}\left\vert \left\langle
T\xi_{n},\zeta_{n}\right\rangle \right\vert $ for all $T\in\mathcal{B}(H)$.
Then there exist vectors $x_{k}\in\mathbb{C}\xi_{k}\ (1\leq k\leq N)$ such
that%
\[
u(T)=\sum_{n=1}^{N}\left\langle Tx_{n},\zeta_{n}\right\rangle
\]
for all $T\in\mathcal{B}(H)$.
\end{theorem}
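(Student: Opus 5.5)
The plan is to show that $u$ factors through the finite-dimensional map
\[
\Phi:\mathcal{B}(H)\rightarrow\mathbb{C}^{N},\qquad \Phi(T)\equiv\left(\left\langle T\xi_{1},\zeta_{1}\right\rangle ,\ldots,\left\langle T\xi_{N},\zeta_{N}\right\rangle \right),
\]
and then to read off the vectors $x_{k}$ from the coefficients of that factorisation. The hypothesis $\left\vert u(T)\right\vert \leq\delta\sum_{n=1}^{N}\left\vert \left\langle T\xi_{n},\zeta_{n}\right\rangle \right\vert$ says exactly that $\left\vert u(T)\right\vert \leq\delta\left\Vert \Phi(T)\right\Vert _{1}$. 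In particular $u(T)=0$ whenever $\Phi(T)=0$. This is an equality obtained from a non-strict inequality, so it is constructively unproblematic. We will then not need weak-operator continuity beyond this bound: once we know how $u$ acts on a few explicit operators, linearity does the rest.

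The key step is to produce operators $T_{1},\ldots,T_{N}\in\mathcal{B}(H)$ with $\Phi(T_{k})=e_{k}$, the $k$-th standard basis vector of $\mathbb{C}^{N}$. That is, we want $\left\langle T_{k}\xi_{n},\zeta_{n}\right\rangle =\delta_{kn}$.
\begin{itemize}
\item First we construct a biorthogonal system: vectors $\eta_{1},\ldots,\eta_{N}$ in the span of the $\xi_{m}$ with $\left\langle \xi_{n},\eta_{k}\right\rangle =\delta_{nk}$.
\item Then we set $T_{k}x\equiv\left\langle x,\eta_{k}\right\rangle \left\Vert \zeta_{k}\right\Vert ^{-2}\zeta_{k}$. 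This is a bounded rank-one operator, and since $\left\Vert \zeta_{k}\right\Vert >0$ it satisfies $\left\langle T_{k}\xi_{n},\zeta_{n}\right\rangle =\delta_{nk}$, as required.
\end{itemize}
The biorthogonal system is where I expect the only real (constructive) work. Take linear independence of $\xi_{1},\ldots,\xi_{N}$ in its constructive sense, so that the Gram matrix $G=\left(\left\langle \xi_{n},\xi_{m}\right\rangle \right)$ is positive definite with $\det G>0$. Equivalently, Gram--Schmidt can be carried out because each $\xi_{k}$ has positive distance from the span of its predecessors. Then $G$ has an explicit inverse, and we define $\eta_{k}\equiv\sum_{m}a_{km}\xi_{m}$ with coefficients taken from $G^{-1}$ (suitably conjugated), which gives the required relations. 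Should the paper's notion of linear independence be phrased differently, I would first derive $\det G>0$ from it, by induction on $N$ using distances to finite-dimensional (hence located) subspaces.

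With the $T_{k}$ in hand, for an arbitrary $T$ the operator $T-\sum_{k=1}^{N}\left\langle T\xi_{k},\zeta_{k}\right\rangle T_{k}$ lies in $\ker\Phi$. By the first paragraph and linearity, writing $c_{k}\equiv u(T_{k})$,
\[
u(T)=\sum_{k=1}^{N}c_{k}\left\langle T\xi_{k},\zeta_{k}\right\rangle .
\]
Putting $x_{k}\equiv c_{k}\xi_{k}\in\mathbb{C}\xi_{k}$, we have $\left\langle Tx_{k},\zeta_{k}\right\rangle =c_{k}\left\langle T\xi_{k},\zeta_{k}\right\rangle$ by linearity of the inner product in its first argument. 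This yields $u(T)=\sum_{n=1}^{N}\left\langle Tx_{n},\zeta_{n}\right\rangle$ for all $T\in\mathcal{B}(H)$.
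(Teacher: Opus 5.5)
Your argument is correct, and it takes a different route from the one the paper points to. The paper gives no proof of this theorem. It defers to \cite[Theorem 10]{dsbfnl} and remarks only that the proof there relies on the locally convex results developed earlier: presumably the continuity criterion of Proposition \ref{jan20p1} and the normability and located-kernel analysis of Lemma \ref{l32} and Proposition \ref{p33}.

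You bypass all of that. The hypothesis is used only to get $\ker\Phi\subset\ker u$. Strong linear independence, through the invertible Gram matrix, is used only to build rank-one operators $T_k$ with $\Phi(T_k)=e_k$, which give an explicit right inverse of $\Phi$. The identity $u(T)=\sum_k u(T_k)\langle T\xi_k,\zeta_k\rangle$ then follows by linear algebra, with $x_k=u(T_k)\xi_k$ produced explicitly. Your route also shows that, once the bound is given, weak-operator continuity, $u\neq 0$, the nontriviality of $H$, and any locatedness or normability question are irrelevant to this statement.

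The locally convex machinery is presumably aimed at the broader setting of \cite{dsbfnl}. There one starts from weak-operator continuity alone, which yields a bound of this shape with vectors $\xi_n$ that need not be linearly independent. Constructively one cannot simply discard the dependent ones, so normability of $u$ and locatedness of $\ker u$ become the real issues.

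One point to tighten in your fallback derivation of $\det G>0$. Positivity of $d(\xi_k,\mathrm{span}\{\xi_1,\dots,\xi_{k-1}\})$ does not follow constructively from locatedness together with $\xi_k-v\neq 0$ for each $v$. It follows because in $H$ the distance is attained at the orthogonal projection of $\xi_k$ onto that span, and this projection is an explicit linear combination of $\xi_1,\dots,\xi_{k-1}$. Strong linear independence then applies directly to the residual, giving a positive distance.
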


\section{Concluding remarks}

The theory could go from here in at least two directions: firstly, developing
the more general theory of what Bishop calls \emph{uniform spaces }(see
\cite[pages 110-111]{Bishop} or \cite[page 124]{BB}), of which theory the
foregoing is a subset; secondly, investigating Bishop's uniform spaces as
apartness (uniform) spaces, as discussed in \cite{BVtech}. Be that as it may,
what we have developed in Sections 1--3 above is enough of the constructive
theory of locally convex spaces to enable results such as those in Section 4
(see also \cite{BridgesVNA}).%

\medskip

%

\bigskip
%

\noindent
\textbf{Author's address: \ }Department of Mathematics \& Statistics,
University of Canterbury, Christchurch 8140, New Zealand%

\noindent
\textbf{Author's email: \ \ \ \ \ }\texttt{dugbridges@gmail.com}

\end{document}